\newcolumntype{L}{>{$}l<{$}}
\newcolumntype{R}{>{$}r<{$}}
\newtheorem{thm}{Theorem}[section]
\newtheorem{lemma}[thm]{Lemma}
\newtheorem{prop}[thm]{Proposition}
\theoremstyle{remark}
\newtheorem*{remark}{Remark}
\theoremstyle{definition}
\newtheorem{defn}[thm]{Definition}
\newtheorem{example}[thm]{Example}
\def\ZZ{\mathbb{Z}}
\def\QQ{\mathbb{Q}}
\def\PP{\mathbb{P}}
\def\NN{\mathbb{N}}
\def\AA{\mathbb{A}}
\def\FF{\mathbb{F}}
\def\RR{\mathbb{R}}
\def\CC{\mathbb{C}}
\def\multi#1#2{\ensuremath{\left(\kern-.3em\left(\genfrac{}{}{0pt}{}{#1}{#2}\right)\kern-.3em\right)}}
\newcommand{\Sym}{\mathrm{Sym}}
\newcommand{\poly}{\mathrm{Poly}}
\newcommand{\irr}{\mathrm{Irr}}
\numberwithin{equation}{section}
\begin{document}

\title[Euler characteristic of $\irr_{d,n}(\RR)$]{Euler characteristic of the space of\\ real multivariate irreducible polynomials}

\author{Trevor Hyde}
\address{Dept. of Mathematics\\
University of Chicago \\
Chicago, IL 60637\\
}
\email{tghyde@uchicago.edu}


\maketitle

Given a field $K$ and integers $d, n \geq 1$, let $\poly_{d,n}(K)$ denote the set of all monic total degree $d$ polynomials in $n$ variables with coefficients in $K$ (see Definition \ref{def poly}.) Let $\irr_{d,n}(K) \subseteq \poly_{d,n}(K)$ denote the subset of all polynomials which are irreducible over $K$. Our main result expresses the compactly supported Euler characteristic of $\irr_{d,n}(\RR)$ in terms of the so-called balanced binary expansion of the number of variables $n$. The \textit{balanced binary expansion} of an integer $n\geq 1$ is the unique expression
\[
    n = \sum_{k=0}^\ell b_k 2^k = 2^{k_{2m}} - 2^{k_{2m-1}} + 2^{k_{2m-2}} - \ldots + 2^{k_{1}} - 2^{k_0}
\]
where $0 \leq k_0 < k_1 < \ldots < k_{2m}$ is a strictly increasing sequence of natural numbers of even length and the signs on the right hand side alternate.

\begin{thm}
\label{thm main intro}
Let $n\geq 1$ be an integer with balanced binary expansion $n = \sum_{k=0}^\ell b_k 2^k$ and let $\chi_c$ denote the compactly supported Euler characteristic. Then
\[
    \chi_c(\irr_{d,n}(\RR)) = \begin{cases} b_k & \text{if } d = 2^k,\\ 0 & \text{otherwise.}\end{cases}
\]
\end{thm}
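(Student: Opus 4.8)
The plan is to encode the numbers $\chi_c(\irr_{e,n}(\RR))$ in an Euler product, evaluate the associated ``zeta function'' $Z_n(t):=\sum_{d\ge 0}\chi_c(\poly_{d,n}(\RR))\,t^d$ in closed form, and then read off the coefficients by M\"obius inversion. The starting point is a direct computation of $\chi_c(\poly_{d,n}(\RR))$ from Definition~\ref{def poly}: the spaces in question are built out of real projective spaces, so additivity of $\chi_c$ together with $\chi_c(\PP^M(\RR))=\tfrac{1+(-1)^M}{2}$ reduces the answer to the parities of certain binomial coefficients (such as $\binom{d+n}{n}$ and $\binom{d+n-1}{n}$), which Kummer's — equivalently Lucas's — theorem expresses in terms of the binary supports of $d$, $d-1$, and $n$. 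The resulting alternating sum telescopes, and using $\prod_{k\ge 0}(1+t^{2^k})=(1-t)^{-1}$ one obtains
\[
Z_n(t)\;=\;(1-t)\prod_{k\notin\supp(n)}\bigl(1+t^{2^k}\bigr)\;=\;\prod_{k\in\supp(n)}\frac{1}{1+t^{2^k}},
\]
where $\supp(n)\subseteq\NN$ is the set of exponents appearing in the binary expansion of $n$.

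The key structural input is the Euler product
\[
\frac{1}{Z_n(t)}\;=\;\prod_{e\ge 1}\bigl(1-t^e\bigr)^{\chi_c(\irr_{e,n}(\RR))}.
\]
Unique factorization of real polynomials identifies $\bigsqcup_{d\ge 0}\poly_{d,n}(\RR)$ with the free commutative monoid $\Sym\bigl(\bigsqcup_{e\ge 1}\irr_{e,n}(\RR)\bigr)$, graded by total degree, and this identification is a semialgebraic homeomorphism on each ``factorization type'' stratum. Granting that the semialgebraic $\chi_c$ is additive and multiplicative and obeys the symmetric-power formula $\sum_{a\ge 0}\chi_c(\Sym^a Y)\,t^a=(1-t)^{-\chi_c(Y)}$, one sums $\chi_c$ over strata and regroups by degree to get the displayed product. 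I would prove the symmetric-power formula from the Burnside-type identity $\chi_c(Y/G)=\frac{1}{|G|}\sum_{g\in G}\chi_c(Y^g)$ for finite $G$ (take $Y=Z^a$, $G=S_a$), which itself follows from additivity of $\chi_c$ and its multiplicativity along finite semialgebraic covers (stratify by orbit type). Making this motivic bookkeeping rigorous over $\RR$ — where, unlike over $\CC$, $\chi_c$ of a semialgebraic set is not transparently ``motivic'' — is what I expect to be the main technical obstacle.

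With both inputs in hand the argument is formal. From $1+t^{2^k}=(1-t^{2^{k+1}})/(1-t^{2^k})$ and telescoping,
\[
\frac{1}{Z_n(t)}\;=\;\prod_{k\in\supp(n)}\frac{1-t^{2^{k+1}}}{1-t^{2^k}}\;=\;\prod_{j\ge 0}\bigl(1-t^{2^j}\bigr)^{b_j},\qquad b_j:=\bigl[\,j-1\in\supp(n)\,\bigr]-\bigl[\,j\in\supp(n)\,\bigr].
\]
Comparing this with the Euler product, and using that the exponents in a product $\prod_{e\ge 1}(1-t^e)^{m_e}$ are determined by the power series — on taking logarithms, the coefficient of $t^m$ is $-\tfrac1m\sum_{e\mid m}e\,m_e$, a finite sum, so M\"obius inversion recovers each $m_e$ — we conclude that $\chi_c(\irr_{d,n}(\RR))=b_j$ when $d=2^j$ and $\chi_c(\irr_{d,n}(\RR))=0$ when $d$ is not a power of $2$. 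In particular the vanishing away from powers of $2$ is an output of the computation, not a hypothesis.

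It remains to recognize $(b_j)_{j\ge 0}$ as the balanced binary expansion of $n$. Each $b_j\in\{-1,0,1\}$, and $\sum_j b_j 2^j=\sum_{k\in\supp(n)}(2^{k+1}-2^k)=n$. Moreover $b_j=-1$ precisely when $j$ is the least element of a maximal block of consecutive integers contained in $\supp(n)$, and $b_j=+1$ precisely when $j-1$ is the greatest element of such a block; since these blocks and the positions immediately above them interleave, the nonzero $b_j$, listed in increasing order, alternate in sign, beginning with $-1$ and ending with $+1$. By the uniqueness asserted in the definition of the balanced binary expansion, $(b_j)$ is that expansion, which completes the proof.
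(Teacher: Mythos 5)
Your proposal is correct, and its computational core coincides with the paper's: the Euler product $\sum_d \chi_c(\poly_{d,n}(\RR))t^d=\prod_e(1-t^e)^{-\chi_c(\irr_{e,n}(\RR))}$ coming from unique factorization together with MacDonald's symmetric-power formula, the identification of $\chi_c(\poly_{d,n}(\RR))$ with parities of $\binom{d+n}{n}$ and $\binom{d+n-1}{n}$ via the projective-space decomposition, Lucas's theorem to detect those parities through binary supports, and the telescoping identity $1+t^{2^k}=(1-t^{2^{k+1}})/(1-t^{2^k})$ are all exactly the steps of the paper's Theorems \ref{thm euler char} and \ref{thm balanced} specialized to $p=2$. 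The differences are organizational but worth noting. The paper interposes the higher necklace polynomials: it proves the product identity once in $\QQ[x]$ (Theorem \ref{thm higher cyclo}), shows $\chi_c(\irr_{d,n}(\RR))=M_{d,n}(-1)$, and then evaluates $M_{d,n}(\zeta_p)$ for an arbitrary prime $p$; this buys the auxiliary results (Theorems \ref{thm eval intro} and \ref{thm neck eval intro}) and the cyclotomic-factor phenomenon, at the cost of a longer route to the headline statement. You work with $\chi_c$ over $\RR$ throughout, which is shorter and self-contained but yields only the $p=2$, $x=-1$ case. You also replace the paper's uniqueness mechanism (the inductive Lemma \ref{lem comb euler}, organized by partitions) with logarithms and M\"obius inversion --- equally valid, and arguably cleaner for extracting integer exponents --- and you identify the balanced binary expansion directly via maximal blocks of consecutive ones in $\supp(n)$ rather than through Lemma \ref{lem balanced}; your block description is a nice explicit supplement to the paper's uniqueness argument. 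The one place you rightly flag caution is the additivity/multiplicativity of $\chi_c$ and the Burnside-type derivation of the symmetric-power formula over $\RR$; the paper handles this at essentially the same level of detail by declaring the relevant spaces tame and citing Spanier and MacDonald, so no gap is introduced by your treatment.
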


\begin{example}
The balanced binary expansion of $n = 13$ is
\[
    13 = 2^4 - 2^2 + 2^1 - 1. 
\]
Thus Theorem \ref{thm main intro} implies that
\[
    \chi_c(\irr_{d,13}(\RR)) = \begin{cases}\hspace{.75em}1 & \text{if } d = 2 \text{ or } 16,\\ -1 & \text{if } d = 1 \text{ or }4,\\ \hspace{.75em}0 & \text{otherwise.} \end{cases}
\]
That is, the space of degree $d$ irreducible polynomials in 13 variables with coefficients in $\RR$ has vanishing compactly supported Euler characteristic in all degrees except $d = 1, 2, 4, 16$, and these exceptional degrees are determined by the binary expansion of $n = 13$.
\end{example}

\begin{example}
For univariate polynomials ($n = 1$) we can explicitly construct cell decompositions of $\irr_{d,1}(\RR)$ for all $d\geq 1$. In particular, the fundamental theorem of algebra and the quadratic formula imply
\begin{align*}
    \irr_{1,1}(\RR) &= \{x + a : a \in \RR\} \cong \RR,\\
    \irr_{2,1}(\RR) &= \{x^2 + bx + c : b^2 < 4c\} \cong \RR^2,\\
    \irr_{d,1}(\RR) &= \emptyset \text{ for $d > 2$.}
\end{align*}
Since $\chi_c(\RR^k) = (-1)^k$ it follows that
\[
    \chi_c(\irr_{d,1}(\RR)) = \begin{cases} -1 & \text{if }d = 1,\\ \hspace{.75em}1 & \text{if }d = 2,\\ \hspace{.75em}0 & \text{if }d > 2. \end{cases}
\]
This calculation is consistent with Theorem \ref{thm main intro} since $1 = 2 - 1$ is the balanced binary expansion of $1$.
\end{example}

The connection between the Euler characteristic of $\irr_{d,n}(\RR)$ and the binary expansion of the number of variables $n$ was discovered empirically and came as a surprise to the author. This result suggests that the spaces $\irr_{d,n}(\RR)$ may have interesting cell decompositions determined by the additive structure of $n$. Our proof of Theorem \ref{thm main intro} arrives at this Euler characteristic computation indirectly via generating functions; the dependence on the binary expansion of $n$ passes through the well-known theorem of Lucas on the mod $p$ residue of binomial coefficients (see Lemma \ref{lem Lucas}.)

Chen \cite{chen} proved that the singular and compactly supported cohomology of the spaces $\irr_{d,n}(\CC)$ stabilizes in certain cohomological degree regimes as $n \rightarrow \infty$. We do not know whether Chen's methods can be adapted to analyze the compactly supported cohomology of $\irr_{d,n}(\RR)$.

In \cite{hyde_liminal} the author introduced the \textit{higher necklace polynomials} $M_{d,n}(x) \in \QQ[x]$ which interpolate the point counts of $\irr_{d,n}(\FF_q)$ for finite fields $\FF_q$ where $q$ is a prime power,
\[
    |\irr_{d,n}(\FF_q)| = M_{d,n}(q).
\]
En route to proving Theorem \ref{thm main intro} we show that the compactly supported Euler characteristics of $\irr_{d,n}(\RR)$ and $\irr_{d,n}(\CC)$ are realized as values of the higher necklace polynomials.

\begin{thm}
\label{thm eval intro}
For all $d, n \geq 1$,
\[
    \chi_c(\irr_{d,n}(\RR)) = M_{d,n}(-1) \hspace{.75in} \chi_c(\irr_{d,n}(\CC)) = M_{d,n}(1) = \begin{cases} n & \text{if }d = 1\\ 0 & \text{otherwise.}\end{cases}
\]
\end{thm}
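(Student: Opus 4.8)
The plan is to deduce all three assertions from the single generating-function identity encoding unique factorization of monic polynomials, specialized at $q=1$ and $q=-1$, together with the principle that for spaces admitting affine pavings the compactly supported Euler characteristics of real and complex points are the corresponding specializations of $\FF_q$-point counts. Over any field $K$ every monic total degree $d$ polynomial in $n$ variables factors uniquely into monic irreducibles, and --- provided the normalization in Definition~\ref{def poly} is such that a product of monic polynomials is monic, so that total degree is additive under multiplication --- grouping the irreducible factors by degree gives a bijection
\[
    \poly_{d,n}(K)\ \cong\ \bigsqcup_{\sum_e e\, m_e = d}\ \prod_{e\geq 1}\mathrm{Sym}^{m_e}(\irr_{e,n}(K)).
\]
For $K=\FF_q$ this partitions a finite set, and since each monic irreducible of degree $e$ contributes a factor $(1-t^e)^{-1}$ to the degree generating function, it gives $\sum_{d\geq 0}P_{d,n}(q)\,t^d=\prod_{e\geq 1}(1-t^e)^{-M_{e,n}(q)}$ with $P_{d,n}(q)=|\poly_{d,n}(\FF_q)|$. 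Each coefficient of $t^d$ is a polynomial in $q$ on both sides and the two agree at all prime powers, so this is an identity in $\QQ[q][[t]]$, which we will specialize to $q=\pm 1$.

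I would then transport the bijection to $K=\RR$ and $K=\CC$. In both cases the multiplication map $\prod_e\mathrm{Sym}^{m_e}(\irr_{e,n}(K))\to\poly_{d,n}(K)$ is injective, has image a piece of a finite partition of $\poly_{d,n}(K)$ --- constructible over $\CC$, semialgebraic over $\RR$ --- and is a morphism of complex varieties (over $\CC$) resp.\ a semialgebraic map (over $\RR$). Because $\chi_c$ is a motivic measure --- classically on complex varieties, and on semialgebraic sets via the o-minimal Euler characteristic, which by van den Dries's structure theorem is additive, multiplicative, and a complete invariant up to semialgebraic bijection with $\chi_c(\RR)=-1$ --- such a map preserves $\chi_c$ (Fubini for constructible morphisms over $\CC$; the invariance theorem over $\RR$). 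Combining this with Macdonald's identity $\sum_m\chi_c(\mathrm{Sym}^m Z)\,t^m=(1-t)^{-\chi_c(Z)}$ --- classical over $\CC$, and over $\RR$ immediate from the Grothendieck ring of semialgebraic sets being $\ZZ$ via $\chi_c$ together with $\sum_m\mathrm{Sym}^m(\cdot)\,t^m$ being a homomorphism out of it --- gives, for $K\in\{\RR,\CC\}$,
\[
    \sum_{d\geq 0}\chi_c(\poly_{d,n}(K))\,t^d\ =\ \prod_{e\geq 1}(1-t^e)^{-\chi_c(\irr_{e,n}(K))}.
\]

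To conclude, I would identify the left-hand sides. The monic normalization endows $\poly_{d,n}$ with an affine paving over $\ZZ$ --- stratify by the lexicographically leading degree $d$ monomial, each stratum being an affine space --- so $[\poly_{d,n}]=P_{d,n}(\LL)$ in the Grothendieck ring of varieties, whence $\chi_c(\poly_{d,n}(\CC))=P_{d,n}(1)$ and $\chi_c(\poly_{d,n}(\RR))=P_{d,n}(-1)$; here $P_{d,n}(1)$ is simply the number of strata, i.e.\ the number of degree $d$ monomials in $n$ variables, $\binom{d+n-1}{n-1}$. The exponents $c_e$ in a product $\prod_e(1-t^e)^{-c_e}$ are uniquely recovered from its coefficients (take $-\log$ and Möbius invert), so comparing the last display for $K=\CC$ (resp.\ $K=\RR$) with the $q=1$ (resp.\ $q=-1$) specialization of the $\FF_q$-identity --- these now having equal left-hand sides --- yields $\chi_c(\irr_{d,n}(\CC))=M_{d,n}(1)$ and $\chi_c(\irr_{d,n}(\RR))=M_{d,n}(-1)$. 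Finally the common left-hand side at $q=1$ is $\sum_d\binom{d+n-1}{n-1}t^d=(1-t)^{-n}$, and the same uniqueness forces $M_{1,n}(1)=n$ and $M_{d,n}(1)=0$ for $d\geq 2$.

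The step I expect to require the most care is the passage to $\RR$: one must check that the factorization bijection is genuinely semialgebraic and that the o-minimal Euler characteristic is a motivic measure compatible with symmetric powers, so that Macdonald's identity applies; over $\CC$ every ingredient is standard. A minor but essential point is that the normalization in Definition~\ref{def poly} really does make products of monic polynomials monic, since that is what legitimizes the generating-function bookkeeping.
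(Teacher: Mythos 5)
Your proposal is correct and follows essentially the same route as the paper: apply $\chi_c$ to the unique-factorization decomposition $\poly_{d,n}(K)\cong\bigsqcup_{\lambda\vdash d}\prod_j\Sym^{m_j}(\irr_{j,n}(K))$, use Macdonald's formula for $\chi_c(\Sym^m X)$ to get the Euler product with exponents $\chi_c(\irr_{j,n}(K))$, identify $\chi_c(\poly_{d,n}(K))=P_{d,n}(\chi_c(K))$ via a cell decomposition, and conclude by uniqueness of the exponents (the paper's Lemma \ref{lem comb euler}). The only cosmetic differences are that you justify the real case via the o-minimal Euler characteristic where the paper uses tameness and additivity of $H_c^*$, and you compute $P_{d,n}(\chi_c(K))$ from a paving by leading monomial rather than from the paper's identification of $\poly_{d,n}(K)$ with a difference of projective spaces.
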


The compactly supported Euler characteristic may be viewed as a topological extension of the cardinality of a finite set. Since $\chi_c(\RR) = -1$ and $\chi_c(\CC) = 1$, it is sometimes said that $\RR$ and $\CC$ behave like ``fields with $-1$ and $1$ element'' respectively; Theorem \ref{thm eval intro} provides an example of a common situation where this heuristic applies. The strategy behind Theorem \ref{thm eval intro} and similar results is to find some universal decomposition of $\irr_{d,n}(K)$ into a disjoint union of affine cells $K^m$. Provided the decomposition is sufficiently nice, algebraic properties of point counts and compactly supported Euler characteristics translate these universal cell decompositions into a common polynomial expression for the respective invariants.

With Theorem \ref{thm eval intro} in hand, Theorem \ref{thm main intro} reduces to evaluating the polynomial $M_{d,n}(x)$ at $x = -1$. More generally we prove the following result on evaluations $M_{d,n}(\zeta_p)$ of the higher necklace polynomials at prime order roots of unity. Note that a \textit{balanced base $p$ expansion} of an integer $n$ is an expression
\[
    n = \sum_{k=0}^\ell b_k p^k = p^{k_{2m}} - p^{k_{2m-1}} + p^{k_{2m-2}} - \ldots + p^{k_{1}} - p^{k_0}
\]
where $0 \leq k_0 < k_1 < \ldots < k_{2m}$ is a strictly increasing sequence of natural numbers of even length and the signs on the right hand side alternate (see Section \ref{sec neck eval}.)

\begin{thm}
\label{thm neck eval intro}
Let $p$ be a prime, let $d, n \geq 1$ be integers, and suppose that $n$ has a balanced base $p$ expansion $n = \sum_{k=0}^\ell b_k p^k$. If $\zeta_p$ is a primitive $p$th root of unity, then
\[
    M_{d,n}(\zeta_p) = \begin{cases} b_k & \text{if }d = p^k\\ 0 & \text{otherwise.}\end{cases}
\]
\end{thm}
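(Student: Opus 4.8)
The plan is to reduce the evaluation $M_{d,n}(\zeta_p)$ to a single generating function identity and then extract that identity using Lucas's theorem. The starting point is the Euler product
\[
\sum_{d\ge 0}|\poly_{d,n}(\FF_q)|\,t^d \;=\; \prod_{e\ge 1}(1-t^e)^{-M_{e,n}(q)},
\]
which holds as an identity of power series in $t$ with coefficients polynomial in $q$, being nothing but unique factorization of monic polynomials into monic irreducibles (this is how the higher necklace polynomials of \cite{hyde_liminal} are characterized). Since both sides are polynomials in $q$, we may specialize $q=\zeta_p$. The transform $(a_e)_{e\ge 1}\mapsto\prod_e(1-t^e)^{-a_e}$ is injective — recover $\sum_{e\mid m}e\,a_e$ as the coefficient of $t^m$ in $\log\prod_e(1-t^e)^{-a_e}$ and apply M\"obius inversion — so Theorem \ref{thm neck eval intro} is \emph{equivalent} to the identity
\[
\sum_{d\ge 0}|\poly_{d,n}(\FF_q)|\big|_{q=\zeta_p}\,t^d \;=\; \prod_{k\ge 0}\bigl(1-t^{p^k}\bigr)^{-b_k},
\]
a finite product since $n=\sum_k b_k p^k$. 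Thus it suffices to compute the left-hand side.

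I would first write the point count explicitly. A monic total degree $d$ polynomial in $n$ variables consists of a monic degree-$d$ form — of which there are $(q^{N}-1)/(q-1)$ over $\FF_q$, where $N=\binom{d+n-1}{n-1}$ is the number of monomials of degree $d$ — together with an arbitrary polynomial of degree $<d$, of which there are $q^{B}$ with $B=\binom{d+n-1}{n}$. Hence $|\poly_{d,n}(\FF_q)| = (q^{N}-1)q^{B}/(q-1)$, and since $N+B=\binom{d+n}{n}$ this equals $\bigl(q^{\binom{d+n}{n}}-q^{\binom{d+n-1}{n}}\bigr)/(q-1)$. Shifting $d\mapsto d-1$ in the second piece and using $\binom{n-1}{n}=0$, the generating function collapses to
\[
\sum_{d\ge 0}|\poly_{d,n}(\FF_q)|\,t^d \;=\; \frac{(1-t)\,\Theta_n(q,t)-1}{q-1},\qquad \Theta_n(q,t):=\sum_{d\ge 0}q^{\binom{d+n}{n}}t^d .
\]
So the whole problem reduces to evaluating the ``partial theta'' series $\Theta_n(\zeta_p,t)$; concretely, the target becomes the assertion $(1-t)\,\Theta_n(\zeta_p,t)=1+(\zeta_p-1)\prod_{k\ge 0}(1-t^{p^k})^{-b_k}$.

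Now Lucas's theorem (Lemma \ref{lem Lucas}) enters. Writing $m$ and $n$ in base $p$, $\binom{m}{n}\bmod p$ is the product $\prod_i\binom{m_i}{n_i}$ of digit binomials; it vanishes unless the digits of $n$ are dominated by those of $m$, and it is periodic in $m$ with period $p^v$ once $p^v>n$. Consequently $\zeta_p^{\binom{d+n}{n}}$ is a periodic function of $d$, so $\Theta_n(\zeta_p,t)$ is a rational function of $t$ with denominator dividing $1-t^{p^v}$. The heart of the argument is to compute this rational function and verify it collapses to the stated product form. I would do this by induction on the number $2\mu$ of terms in the balanced base-$p$ expansion $n=p^{a_{2\mu}}-p^{a_{2\mu-1}}+\cdots+p^{a_2}-p^{a_1}$: peeling off the top pair $p^{a_{2\mu}}-p^{a_{2\mu-1}}$ leaves $n''=p^{a_{2\mu-2}}-\cdots-p^{a_1}$, whose balanced expansion has length $2(\mu-1)$, and one expresses the point-count generating function at $\zeta_p$ for $n$ in terms of the one for $n''$ by tracking how the carries in the base-$p$ arithmetic of $m\mapsto m-n$ versus $m\mapsto m-n''$ change the digit-domination condition and the surviving digit binomials. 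The base case is a single block $n=p^{a_2}-p^{a_1}$ with $a_1<a_2$, where a direct computation generalizing the $n=2$, $p=2$ case from the introduction should give $\sum_d|\poly_{d,n}(\FF_q)|\big|_{q=\zeta_p}t^d=(1-t^{p^{a_1}})(1-t^{p^{a_2}})^{-1}$, matching $\prod_k(1-t^{p^k})^{-b_k}$.

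The main obstacle is precisely this digit bookkeeping. Because the exponent $\binom{m}{n}\bmod p=\prod_i\binom{m_i}{n_i}$ is a \emph{product} of digit data, $\zeta_p^{\binom{m}{n}}$ does not factor over digit positions, so $\Theta_n(\zeta_p,t)$ does not split as an infinite product in any naive way; both the rationality and the eventual clean form $\prod_k(1-t^{p^k})^{-b_k}$ emerge only after isolating the sparse set of $m$ with $\binom{m}{n}\not\equiv 0\pmod p$ and carefully tracking the borrows produced in passing to the balanced expansion. Checking that the two-term reduction $n\rightsquigarrow n''$ interacts correctly with the leading factor $(1-t)$ and the shift by $t^{n}$ hidden in $\Theta_n$ — equivalently, that no spurious factors $(1-t^{p^j})$ survive — is where essentially all of the work lies; everything else in the argument is formal.
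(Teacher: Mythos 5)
Your reduction is sound and is in fact the paper's reduction: unique factorization gives the Euler product, the exponents in such a product are uniquely determined (Lemma \ref{lem comb euler} in the paper; your $\log$-plus-M\"obius argument works equally well over $\QQ(\zeta_p)$), and $P_{d,n}(x) = \bigl(x^{\binom{d+n}{n}} - x^{\binom{d+n-1}{n}}\bigr)/(x-1)$ collapses the left side to $(1-t)\,\Theta_n(q,t)$ up to the constant term. Your target identity $(1-t)\Theta_n(\zeta_p,t) = 1 + (\zeta_p-1)\prod_k(1-t^{p^k})^{-b_k}$ is correct and equivalent to the paper's. But the proof stops exactly where the content begins: you do not compute $\Theta_n(\zeta_p,t)$, you only propose an induction on the length of the balanced expansion whose base case ``should give'' a formula and whose inductive step is described as carry/borrow bookkeeping that you explicitly flag as unresolved. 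That is a genuine gap, not a routine verification.

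The missing idea is that the balanced-expansion hypothesis makes the ``obstacle'' you identify disappear. You worry that $\zeta_p^{\binom{d+n}{n}}$ cannot factor over digit positions because the exponent $\binom{d+n}{n}\bmod p = \prod_i\binom{a_i+b_i}{b_i}$ is a product of digit data. But when every nonzero base-$p$ digit of $n$ equals $p-1$, each factor $\binom{a_i+b_i}{b_i}$ is either $0$ (a carry occurs) or $1$ (since $b_i=0$ gives $\binom{a_i}{0}=1$ and $b_i=p-1$ forces $a_i=0$, giving $\binom{p-1}{p-1}=1$). Hence $\binom{d+n}{n}\equiv 0$ or $1 \pmod p$, so $\zeta_p^{\binom{d+n}{n}} = 1 + (\zeta_p-1)\mathbf{1}[d\text{ is $p$-complementary to }n]$, and the complementarity condition \emph{does} factor over digit positions: its generating function is $\frac{1}{1-t}\prod_{i}\frac{1-t^{p^{\ell_i}}}{1-t^{p^{\ell_i+1}}}$ where the $\ell_i$ index the nonzero digits of $n$. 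Multiplying by $(1-t)$ and rewriting in terms of the balanced expansion gives $\prod_k(1-t^{p^k})^{-b_k}$ directly — no induction on the number of blocks, and no tracking of borrows, is needed. This single observation is what the paper's proof of Theorem \ref{thm balanced} supplies and what your proposal lacks.
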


In particular, for a fixed $n$ with balanced base $p$ expansion, $M_{d,n}(x)$ is divisible by the $p$th cyclotomic polynomial $\Phi_p(x)$ for all but finitely many $d$. When $n = 1$ the higher necklace polynomials specialize to the classic sequence $M_d(x) := M_{d,1}(x)$ of \textit{necklace polynomials}. Odesky and the author \cite{hyde odesky} observed that the necklace polynomials $M_d(x)$ are highly reducible over $\QQ$ with nearly all of their irreducible factors being cyclotomic polynomials. They explain this phenomenon in terms of the representation theory of finite abelian groups and the combinatorics of hyperplane arrangements. While their theory only applies when $n = 1$, Theorem \ref{thm neck eval intro} implies that the phenomenon of unexpected cyclotomic factors extends, albeit partially, to the higher necklace polynomials $M_{d,n}(x)$ as well.

Finally we note the generating function identity which plays a critical role in our analysis and may be of independent interest. Let $P_{d,n}(x) \in \QQ[x]$ denote the polynomial (see Section \ref{sec neck poly}) such that for all prime powers $q$,
\[
    |\poly_{d,n}(\FF_q)| = P_{d,n}(q).
\]

\begin{thm}[Higher Cyclotomic Identity]
\label{thm higher cyclo intro}
For each $n\geq 1$, the following identity holds in $\QQ[x][\![t]\!]$,
\[
    \sum_{d\geq 0}P_{d,n}(x)t^d = \prod_{j \geq 1}\Big(\frac{1}{1 - t^j}\Big)^{M_{j,n}(x)}.
\]
\end{thm}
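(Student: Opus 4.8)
The plan is to reduce the identity to a point-count statement over finite fields and then apply unique factorization in $\FF_q[x_1,\ldots,x_n]$. First I would check that the right-hand side is a well-defined element of $\QQ[x][\![t]\!]$: writing $M_j := M_{j,n}(x)$, the factor $(1-t^j)^{-M_j}$ is defined by the binomial series $\sum_{k\geq 0}\binom{M_j+k-1}{k}t^{jk}$, whose coefficients are polynomials in $M_j$ and hence in $x$; since for each fixed $d$ only finitely many of the factors, and finitely many terms within each, contribute to the coefficient of $t^d$, the infinite product is meaningful and its $t^d$-coefficient is some polynomial $Q_{d,n}(x)\in\QQ[x]$. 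The left-hand side has $t^d$-coefficient $P_{d,n}(x)\in\QQ[x]$. Two polynomials agreeing at infinitely many arguments coincide, so it suffices to prove $P_{d,n}(q)=Q_{d,n}(q)$ for every prime power $q$; equivalently, that after substituting $x=q$ the identity holds in $\ZZ[\![t]\!]$ — and this substitution is compatible with the series on both sides precisely because only finitely many terms matter in each $t$-degree.

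It remains to show, for each prime power $q$,
\[
    \sum_{d\geq 0}|\poly_{d,n}(\FF_q)|\,t^d \;=\; \prod_{j\geq 1}\Big(\frac{1}{1-t^j}\Big)^{|\irr_{j,n}(\FF_q)|},
\]
which I would prove by the standard Euler-product manipulation. Since $|\irr_{j,n}(\FF_q)|$ is the number of monic irreducible polynomials of total degree $j$, regrouping the factors by irreducible polynomial gives
\[
    \prod_{j\geq 1}\Big(\frac{1}{1-t^j}\Big)^{|\irr_{j,n}(\FF_q)|} \;=\; \prod_{f\text{ monic irred.}}\frac{1}{1-t^{\deg f}} \;=\; \prod_{f\text{ monic irred.}}\big(1+t^{\deg f}+t^{2\deg f}+\cdots\big).
\]
Expanding, the coefficient of $t^d$ counts families $(e_f)_f$ of nonnegative integers, almost all zero, with $\sum_f e_f\deg f = d$. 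The set of monic polynomials, graded by total degree, is a free commutative monoid on the monic irreducibles — this is unique factorization in the UFD $\FF_q[x_1,\ldots,x_n]$ together with the normalization built into Definition \ref{def poly} — so $(e_f)_f\mapsto\prod_f f^{e_f}$ is a bijection onto the monic polynomials of total degree $d$. Hence that coefficient equals $|\poly_{d,n}(\FF_q)|=P_{d,n}(q)$, completing the proof.

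The main obstacle is bookkeeping rather than conceptual content: one must take care that the right-hand side, with exponents varying polynomially in $x$, genuinely defines an element of $\QQ[x][\![t]\!]$ and that specialization at $x=q$ is legitimate, both of which follow from the observation that only finitely many terms contribute to each coefficient of $t$. The algebraic core — unique factorization into monic irreducibles — is the multivariate analogue of the reasoning behind the classical cyclotomic identity, recovered here when $n=1$ and the left-hand side collapses to $(1-qt)^{-1}$.
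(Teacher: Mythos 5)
Your proof is correct, and it reaches the identity by a somewhat different route from the paper's. The paper first records the partition-indexed decomposition \eqref{eqn unique fac}, $\poly_{d,n}(K) = \bigsqcup_{\lambda\vdash d}\prod_{j}\Sym^{m_j(\lambda)}(\irr_{j,n}(K))$, takes cardinalities over $\FF_q$ to obtain the polynomial identity $P_{d,n}(x)=\sum_{\lambda\vdash d}\prod_{j}\multi{M_{j,n}(x)}{m_j(\lambda)}$, and then invokes a general formal lemma over binomial rings (Lemma \ref{lem comb euler}) asserting that such a system of coefficient identities is equivalent to the infinite-product identity. You instead reduce directly to the point-count statement over $\FF_q$ and prove it by the classical Euler-product regrouping $\prod_{j}(1-t^j)^{-|\irr_{j,n}(\FF_q)|}=\prod_{f}(1-t^{\deg f})^{-1}$, with unique factorization entering as the statement that the monic polynomials form a free commutative monoid on the monic irreducibles. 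Both arguments rest on the same two pillars --- reduction to finite fields via polynomials agreeing at infinitely many values, and unique factorization in $\FF_q[x_1,\ldots,x_n]$ --- and your attention to the well-definedness of the product in $\QQ[x][\![t]\!]$ and to the legitimacy of specializing $x=q$ coefficientwise is exactly the right bookkeeping (the two are really the same observation: each $t^d$-coefficient involves only the finitely many $j\leq d$ and finitely many terms of each binomial series). What the paper's route buys that yours does not is the uniqueness clause of Lemma \ref{lem comb euler}: the paper reuses that uniqueness later (in the proofs of Theorem \ref{thm euler char} and Theorem \ref{thm balanced}) to read off the exponents $M_{j,n}$ from other product expansions of the same power series, so the formal lemma is doing double duty there; your Euler-product argument establishes the identity itself but would need a separate (easy) induction to extract uniqueness of the exponents.
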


When $n = 1$, $P_{d,1}(x) = x^d$ and Theorem \ref{thm higher cyclo intro} specializes to the classic so-called \textit{cyclotomic identity},
\[
    \frac{1}{1 - xt} = \sum_{d\geq 0} x^dt^d = \prod_{d\geq 1}\Big(\frac{1}{1 - t^d}\Big)^{M_d(x)}.
\]
There are several interpretations of the cyclotomic identity, including with $x = q$ as an Euler product for the Hasse-Weil zeta function $\zeta_{\AA^1(\FF_q)}(t)$ of the affine line.

\subsection{Organization}
The body of this paper is divided into four sections. In Section \ref{sec neck poly} we prove the higher cyclotomic identity. In Section \ref{sec euler} we realize the compactly supported Euler characteristics of $\irr_{d,n}(\RR)$ and $\irr_{d,n}(\CC)$ as values of the higher necklace polynomials. In Section \ref{sec neck eval} we evaluate the higher necklace polynomials at prime order $p$ roots of unity in terms of the balanced base $p$ expansion of the number of variables $n$. Finally, in Section \ref{sec conclusion} we bring everything together to express the compactly supported Euler characteristic of $\irr_{d,n}(\RR)$ in terms of the balanced binary expansion of $n$.

\subsection{Note on previous versions}
The results in this paper originally appeared in the author's Ph.D. dissertation \cite{hyde_thesis} and as the second half of the preprint \cite{hyde}. This earlier preprint has split into two papers with a coauthor added to the first half \cite{hyde odesky}.

\subsection{Acknowledgements}
We would like to thank Benson Farb, Nir Gadish, and Phil Tosteson for helpful discussions related to this work. The author is partially supported by the NSF MSPRF and the Jump Trading Mathlab Research Fund.

\section{Higher necklace polynomials}
\label{sec neck poly}
In this section we prove the higher cyclotomic identity. First we recall the definitions of the sets $\poly_{d,n}(K)$, $\irr_{d,n}(K)$ and their associated polynomials $P_{d,n}(x)$, $M_{d,n}(x)$ from \cite{hyde_liminal}. Let $d, n \geq 1$ be integers and let $K$ be an arbitrary field.

\begin{defn}
\label{def poly}
Let $\poly_{d,n}(K)$ be the set of $K^\times$-orbits of total degree $d$ polynomials in $K[x_1, x_2,\ldots, x_n]$. The \textit{total degree} of a monomial $x_1^{m_1}x_2^{m_2}\cdots x_n^{m_n}$ is $m_1 + m_2 + \ldots + m_n$ and the total degree of a polynomial $f \in K[x_1, x_2, \ldots, x_n]$ is the maximum total degree of the monomial summands of $f$. We refer to the elements of $\poly_{d,n}(K)$ as \textit{monic polynomials}. Let $\irr_{d,n}(K) \subseteq \poly_{d,n}(K)$ be the subset of all monic polynomials which are irreducible over $K$. 
\end{defn}

Note that if we choose a monomial ordering on $K[x_1, x_2, \ldots, x_n]$, then there is a unique representative for each $K^\times$-orbit with leading coefficient 1. Hence our notion of a monic polynomial is equivalent to the more common notion after such a choice of monomial order.

Recall that for any field $K$, the polynomial ring $K[x_1, x_2, \ldots, x_n]$ is a unique factorization domain. Unique factorization gives us the decomposition
\begin{equation}
\label{eqn unique fac}
    \poly_{d,n}(K) = \bigsqcup_{\lambda \vdash d}\prod_{j\geq 1}\Sym^{m_j(\lambda)}(\irr_{j,n}(K)),
\end{equation}
where the disjoint union is over all partitions $\lambda$ of $d$ and $m_j(\lambda)$ is the number of parts of $\lambda$ of size $j$. Here $\Sym^m(X) := X^m / S_m$ denotes the $m$-fold symmetric product.

In \cite[Lem. 2.1]{hyde_liminal} the author proved that there are polynomials $P_{d,n}(x), M_{d,n}(x) \in \QQ[x]$ such that for any finite field $\FF_q$,
\[
    |\poly_{d,n}(\FF_q)| = P_{d,n}(q)\hspace{.75in} |\irr_{d,n}(\FF_q)| = M_{d,n}(q).
\]
We call $M_{d,n}(x)$ the \textit{higher necklace polynomials}. Let $\multi{x}{m} \in \QQ[x]$ be the polynomial
\begin{equation}
\label{eqn multi def}
    \multi{x}{m} := \frac{x(x+1)(x+2)\cdots(x+m-1)}{m!} = \binom{x + m - 1}{m}.
\end{equation}
If $n \in \NN$, then $\multi{n}{m}$ may be interpreted as the number of ways to choose $m$ objects from a collection of $n$ objects with repetitions. Taking cardinalities of both sides in \eqref{eqn unique fac} with $K = \FF_q$ gives us the polynomial identity
\begin{equation}
\label{eqn unique fac polys}
    P_{d,n}(x) = \sum_{\lambda \vdash d} \prod_{j\geq 1} \multi{M_{j,n}(x)}{m_j(\lambda)}.
\end{equation}

\begin{defn}
\label{def bin ring}
A commutative ring $R$ is called a \textit{binomial ring} if
\begin{enumerate}
    \item $R$ is torsion free as an abelian group ($ma = 0$ with $m \in \ZZ$ and $a \in A$ implies $m = 0$ or $a = 0$,)
    \item For each $a \in R$ and $n\geq 0$, $\binom{a}{n} = \frac{1}{n!}a(a-1)(a-2)\cdots(a - n + 1) \in R$.
\end{enumerate}
\end{defn}

Binomial rings were defined by Philip Hall \cite{Hall} in his study of nilpotent groups. See Elliott \cite{Elliott} for an overview and further references on binomial rings. Examples of binomial rings include any localization of $\ZZ$, any $\QQ$-algebra, and the ring of integer valued polynomials in $\QQ[x]$. The second condition in Definition \ref{def bin ring} is equivalent to $\multi{a}{n} \in R$ for all $a \in R$ and $n\geq 0$ as can be seen by \eqref{eqn multi def}.

Let $R$ be a binomial ring and let $\Lambda(R) := 1 + t R[\![t]\!]$ be the multiplicative group of unital formal power series with coefficients in $R$. We use $\multi{x}{n}$ to define an exponential action of $R$ on certain elements of $\Lambda(R)$. In particular,
\[
    \left(\frac{1}{1 - t}\right)^a := \sum_{n\geq 0}\multi{a}{n}t^n.
\]
This identity is equivalent to the binomial theorem by \eqref{eqn multi def}.

Lemma \ref{lem comb euler} is well-known in the context of formal power series, symmetric functions, and the theory of Witt vectors but is typically not stated in the generality which we technically require. As we will make use of this several times we prove it here for completeness.

\begin{lemma}
\label{lem comb euler}
For any binomial ring $R$ and any sequence $a_d \in R$ for $d \geq 0$ such that $a_0 = 1$ there exists a unique sequence $b_j \in R$ for $j\geq 1$ such that the following identity holds in $\Lambda(R)$,
\begin{equation}
\label{eqn comb euler}
    \sum_{d\geq 0}a_d t^d = \prod_{j\geq 1}\left(\frac{1}{1 - t^j}\right)^{b_j}.
\end{equation}
Furthermore \eqref{eqn comb euler} is equivalent to
\[
    a_d = \sum_{\lambda \vdash d}b_\lambda
\]
for all $d\geq 1$ where for a partition $\lambda = (1^{m_1}2^{m_2}\cdots)$
\begin{equation}
\label{eq def b_lambda}
    b_\lambda := \prod_{j\geq 1}\multi{b_j}{m_j}.
\end{equation}
\end{lemma}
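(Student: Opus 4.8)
The plan is to establish the ``furthermore'' clause first --- i.e.\ that \eqref{eqn comb euler} is equivalent to the combinatorial identity $a_d = \sum_{\lambda\vdash d}b_\lambda$ --- and then to deduce existence and uniqueness of the $b_j$ from that reformulation by a triangular recursion. For the first step I would expand the right-hand side of \eqref{eqn comb euler} using the definition of the exponential action: each factor is $\left(\frac{1}{1 - t^j}\right)^{b_j} = \sum_{m\geq 0}\multi{b_j}{m}t^{jm}$, which is a genuine element of $\Lambda(R)$ since $\multi{b_j}{m}\in R$ because $R$ is a binomial ring. The $j$th factor contributes nothing in degrees below $j$, so the infinite product is a well-defined element of $\Lambda(R)$ and its degree-$d$ coefficient is a finite sum; multiplying out gives coefficient $\sum_{\lambda\vdash d}\prod_{j\geq 1}\multi{b_j}{m_j(\lambda)} = \sum_{\lambda\vdash d}b_\lambda$. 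Comparing with $\sum_d a_d t^d$ coefficient by coefficient yields the claimed equivalence.

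For existence and uniqueness I would induct on $d$. The crucial observation is that $(d)$ is the only partition of $d$ having a part of size $d$, and $b_{(d)} = \multi{b_d}{1} = b_d$; every other $\lambda\vdash d$ has all parts $\leq d-1$, so $b_\lambda$ depends only on $b_1,\dots,b_{d-1}$. Hence $a_d = \sum_{\lambda\vdash d}b_\lambda$ rearranges to
\[
    b_d = a_d - \sum_{\substack{\lambda\vdash d\\ \lambda\neq(d)}}b_\lambda,
\]
which determines $b_d$ uniquely from $a_d$ and the previously chosen $b_1,\dots,b_{d-1}$, and --- crucially --- exhibits $b_d$ as an element of $R$, since each $\multi{b_j}{m_j}$ lies in $R$ by the binomial property once $b_1,\dots,b_{d-1}\in R$. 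Defining the $b_j$ by this recursion proves existence; the same formula shows any solution is forced, proving uniqueness. This completes the induction and, together with the first step, the lemma.

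The only genuine subtlety is at the level of bookkeeping: one must verify that the product over $j\geq 1$ in \eqref{eqn comb euler} converges in $\Lambda(R)$ and that coefficient extraction commutes with the product, both of which reduce to the remark that the $j$th factor starts in degree $j$. I would deliberately avoid the more familiar ``take logarithms'' proof, since that would require passing to $R\otimes\QQ$ to make sense of the needed divisions by integers; the triangular recursion above works directly over $R$ and never divides, so the torsion-free axiom of a binomial ring is not even used in this particular lemma.
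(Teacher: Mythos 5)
Your proof is correct and follows essentially the same route as the paper: expand each factor $\left(\frac{1}{1-t^j}\right)^{b_j}=\sum_{m\geq 0}\multi{b_j}{m}t^{jm}$, identify the degree-$d$ coefficient of the product with $\sum_{\lambda\vdash d}b_\lambda$, and then determine the $b_j$ by the triangular recursion $b_d = a_d - \sum_{\lambda\neq(d)}b_\lambda$ via induction on $d$. Your added remarks on convergence of the infinite product and on why $b_d$ lands in $R$ are correct refinements of the same argument, not a different approach.
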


\begin{proof}
The right hand side of \eqref{eqn comb euler} expands as
\[
    \prod_{j\geq 1}\left(\frac{1}{1 - t^j}\right)^{b_j} = \prod_{j\geq 1}\sum_{m\geq 0}\multi{b_j}{m}t^{mj} = \sum_{d\geq 0}\sum_{\lambda\vdash d}b_\lambda t^d.
\]
We show by induction on $d$ that there exists a uniquely determined sequence $b_j$ such that for all $d\geq 1$,
\[
    a_d = \sum_{\lambda \vdash d}b_\lambda.
\]
For $d = 1$ there is only partition $\lambda$ and thus $a_1 = b_1$. Now suppose that $d > 1$ and that we have shown $b_j$ is uniquely determined for $j < d$. Then 
\[
    b_d = a_d - \sum_{\substack{\lambda \vdash d\\ \lambda \neq (d)}}b_\lambda.
\]
If $\lambda \neq (d)$, then all parts of $\lambda$ have size $j < d$ hence $b_d$ is uniquely determined by our induction hypothesis.
\end{proof}

\begin{thm}[Higher Cyclotomic Identity]
\label{thm higher cyclo}
For each $n\geq 1$, the following identity holds in $\Lambda(\QQ[x])$,
\[
    \sum_{d\geq 0}P_{d,n}(x)t^d = \prod_{j \geq 1}\Big(\frac{1}{1 - t^j}\Big)^{M_{j,n}(x)}.
\]
\end{thm}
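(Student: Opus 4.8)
The plan is to deduce Theorem~\ref{thm higher cyclo} directly from Lemma~\ref{lem comb euler} together with the combinatorial identity \eqref{eqn unique fac polys} coming from unique factorization. First I would set $R = \QQ[x]$, which is a $\QQ$-algebra and hence a binomial ring, so Lemma~\ref{lem comb euler} applies. Then I would fix $n \geq 1$ and take the sequence $a_d := P_{d,n}(x) \in \QQ[x]$ for $d \geq 0$; note $a_0 = P_{0,n}(x) = 1$ since the only total degree $0$ monic polynomial is the constant $1$ (after passing to $K^\times$-orbits), so the hypotheses of the lemma are met.

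The key step is then to observe that \eqref{eqn unique fac polys} says precisely that $a_d = \sum_{\lambda \vdash d} b_\lambda$ holds for all $d \geq 1$ with the choice $b_j := M_{j,n}(x)$, where $b_\lambda = \prod_{j \geq 1} \multi{M_{j,n}(x)}{m_j(\lambda)}$ matches the definition \eqref{eq def b_lambda}. Since Lemma~\ref{lem comb euler} guarantees that the sequence $(b_j)$ satisfying $a_d = \sum_{\lambda \vdash d} b_\lambda$ is unique, and since $(M_{j,n}(x))_{j \geq 1}$ is such a sequence, the lemma's equivalence immediately yields
\[
    \sum_{d \geq 0} P_{d,n}(x) t^d = \prod_{j \geq 1} \Big(\frac{1}{1 - t^j}\Big)^{M_{j,n}(x)}
\]
in $\Lambda(\QQ[x])$, which is the desired identity.

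The only genuine content to check carefully — and the place I'd expect a reader to want detail — is the justification of \eqref{eqn unique fac polys} itself, i.e. that the decomposition \eqref{eqn unique fac} of $\poly_{d,n}(K)$ via unique factorization, upon taking $\FF_q$-point counts, produces the stated polynomial identity with the $\multi{\;}{\;}$ factors. This rests on two facts: that $|\Sym^m(\irr_{j,n}(\FF_q))| = \multi{|\irr_{j,n}(\FF_q)|}{m} = \multi{M_{j,n}(q)}{m}$ because an unordered $m$-tuple with repetition from a set of size $N$ is counted by $\multi{N}{m}$; and that two polynomials in $\QQ[x]$ agreeing at all prime powers $q$ are equal, so the numerical identity at each $q$ promotes to a polynomial identity. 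Both are standard and are already cited from \cite{hyde_liminal}, so in the writeup I would simply recall \eqref{eqn unique fac polys} and invoke Lemma~\ref{lem comb euler}; there is no real obstacle once those ingredients are in place.
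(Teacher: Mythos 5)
Your proof is correct and is essentially identical to the paper's: both apply Lemma \ref{lem comb euler} with $R = \QQ[x]$, $a_d = P_{d,n}(x)$, and $b_j = M_{j,n}(x)$, using the unique-factorization identity \eqref{eqn unique fac polys}. The extra details you supply (checking $a_0 = 1$ and recalling why \eqref{eqn unique fac polys} holds) are sound and consistent with the paper's cited sources.
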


\begin{proof}
This follows directly from Lemma \ref{lem comb euler} and the identity \eqref{eqn unique fac polys} with $R = \QQ[x]$, $a_d = P_{d,n}(x)$, and $b_j = M_{j,n}(x)$.
\end{proof}

\section{Euler characteristics as higher necklace values}
\label{sec euler}

The goal of this section is prove that the compactly supported Euler characteristics of $\irr_{d,n}(\RR)$ and $\irr_{d,n}(\CC)$ may be realized as values of the higher necklace polynomials. We begin with some background on compactly supported Euler characteristics.

\begin{defn}
\label{def euler char}
Say a topological space $X$ is \textit{tame} if the compactly supported singular cohomology $H_c^k(X,\QQ)$ (see Hatcher \cite[Pg. 243]{Hatcher}) vanishes for all but finitely many $k$. If $X$ is tame, then the \textit{compactly supported Euler characteristic} $\chi_c(X)$ is
\[
    \chi_c(X) := \sum_{k\geq 0}(-1)^k \dim_\QQ H_c^k(X, \QQ).
\]
\end{defn}

Affine and projective spaces over $\RR$ and $\CC$ are tame as are locally closed algebraic subsets of projective spaces and their images under algebraic maps. Tame spaces are closed under disjoint unions and Cartesian products. Lemma \ref{lem poly dec} implies that $\poly_{d,n}(\RR)$ and $\poly_{d,n}(\CC)$ are tame. It then follows by induction from \eqref{eqn unique fac} that $\irr_{d,n}(\RR)$ and $\irr_{d,n}(\CC)$ are tame for all $d, n \geq 1$.

\begin{lemma}
\label{lem poly dec}
The set $\poly_{\leq d, n}(K)$ of all monic polynomials with total degree at most $d$ is naturally in bijection with the projective space $\PP^{\binom{d + n}{n}-1}(K)$, and thus with respect to the natural inclusion $\poly_{\leq d-1, n}(K) \subseteq \poly_{\leq d, n}(K)$ we can identify $\poly_{d,n}(K)$ with the complement
\[
    \poly_{d,n}(K) \cong \PP^{\binom{d + n}{n}-1}(K) \setminus \PP^{\binom{d + n - 1}{n}-1}(K).
\]
\end{lemma}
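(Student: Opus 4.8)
The plan is to identify $\poly_{\leq d,n}(K)$ with the projectivization of the finite-dimensional $K$-vector space of polynomials of total degree at most $d$. Write $V_{\leq d} \subseteq K[x_1, \ldots, x_n]$ for the space of polynomials (including $0$) of total degree at most $d$; this is a $K$-subspace with basis the monomials $x_1^{m_1}\cdots x_n^{m_n}$ satisfying $m_1 + \cdots + m_n \leq d$. First I would count this basis: introducing a slack variable $m_0 := d - (m_1 + \cdots + m_n) \geq 0$ gives a bijection with the nonnegative integer solutions of $m_0 + m_1 + \cdots + m_n = d$, of which there are $\binom{d+n}{n}$ by stars and bars. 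Hence $\dim_K V_{\leq d} = \binom{d+n}{n}$.

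Next, since a nonzero polynomial of total degree at most $d$ is precisely a nonzero vector of $V_{\leq d}$, and two such polynomials lie in the same $K^\times$-orbit if and only if they span the same line in $V_{\leq d}$, the set $\poly_{\leq d,n}(K)$ of $K^\times$-orbits is canonically the projective space $\PP(V_{\leq d}) \cong \PP^{\binom{d+n}{n}-1}(K)$; a choice of monomial ordering on the basis pins down the homogeneous coordinates explicitly. Under this identification the inclusion $\poly_{\leq d-1,n}(K) \subseteq \poly_{\leq d,n}(K)$ corresponds to the inclusion $\PP(V_{\leq d-1}) \subseteq \PP(V_{\leq d})$ induced by the coordinate subspace $V_{\leq d-1} \subseteq V_{\leq d}$ spanned by the monomials of total degree at most $d-1$; by the same count $\dim_K V_{\leq d-1} = \binom{d+n-1}{n}$, so $\PP(V_{\leq d-1}) \cong \PP^{\binom{d+n-1}{n}-1}(K)$ sits inside $\PP^{\binom{d+n}{n}-1}(K)$ as a standard linear subspace.

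Finally I would note that a nonzero polynomial has total degree exactly $d$ if and only if it lies in $V_{\leq d}$ but not in $V_{\leq d-1}$, equivalently if and only if its class in $\PP(V_{\leq d})$ lies outside $\PP(V_{\leq d-1})$. Therefore $\poly_{d,n}(K) = \poly_{\leq d,n}(K) \setminus \poly_{\leq d-1,n}(K)$ is identified with $\PP^{\binom{d+n}{n}-1}(K) \setminus \PP^{\binom{d+n-1}{n}-1}(K)$, as claimed. Each step is a direct unwinding of the definitions plus an elementary dimension count, so there is no substantial obstacle; the only points needing a little care are that the zero polynomial is harmlessly excluded once one passes to $K^\times$-orbits, and that the ambient inclusion of projective spaces is indeed the natural one, which holds because $V_{\leq d-1}$ is a coordinate subspace of $V_{\leq d}$ in the monomial basis.
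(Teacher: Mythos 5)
Your proposal is correct and follows essentially the same route as the paper: identify $\poly_{\leq d,n}(K)$ with the projectivization of the vector space of polynomials of total degree at most $d$, count its dimension as $\binom{d+n}{n}$, and take the complement of the linear subspace of lower-degree polynomials. You simply spell out the stars-and-bars count and the orbit-equals-line observation that the paper leaves as "standard."
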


\begin{proof}
Consider the $K$-vector space spanned by all monomials in $n$ variables of degree at most $d$. A standard counting argument implies that this space has dimension $\binom{d + n}{n}$. The projectivization of this vector space is, by definition, the space of all non-zero monic degree at most $d$ polynomials in $K[x_1, x_2, \ldots, x_n]$. Hence $\poly_{\leq d, n}(K) \cong \PP^{\binom{d + n}{n}-1}(K)$.
\end{proof}

Lemma \ref{lem EC props} recalls some of the facts about $\chi_c$ that we will need.

\begin{lemma}
\label{lem EC props}
Suppose that $X$ and $Y$ are tame spaces. Then
\begin{enumerate}
    \item $\chi_c(X \sqcup Y) = \chi_c(X) +\chi_c(Y)$,
    \item $\chi_c(X\times Y) = \chi_c(X)\chi_c(Y)$,
    \item $\chi_c(\RR) = -1$ and $\chi_c(\CC) = 1$,
    \item If $K = \RR$ or $\CC$, then $\chi_c(\PP^{n-1}(K)) = [n]_{\chi_c(K)}$, where
    \[
        [n]_x := \frac{x^n - 1}{x - 1}.
    \]
    \item $\Sym^m X$ is tame for all $m\geq 1$ and
    \[
        \chi_c(\Sym^m X) = \multi{\chi_c(X)}{m}.
    \]
    Equivalently, in $\Lambda(\ZZ) = 1 + t\ZZ[\![t]\!]$ we have
    \[
        \sum_{d\geq 0}\chi_c(\Sym^d X)t^d = \left(\frac{1}{1 - t}\right)^{\chi_c(X)}.
    \]
\end{enumerate}
\end{lemma}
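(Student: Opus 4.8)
The plan is to verify (1)--(5) in order using standard properties of compactly supported cohomology with $\QQ$-coefficients; only (5) requires genuine work. Throughout, the spaces that occur in this paper --- locally closed subsets of real or complex projective space and their symmetric products --- are locally compact Hausdorff, which is what makes the relevant long exact sequences, the K\"unneth formula, and the transfer isomorphism available. For (1), a compact subset of $X\sqcup Y$ meets each summand in a compact set, so $H_c^k(X\sqcup Y,\QQ)\cong H_c^k(X,\QQ)\oplus H_c^k(Y,\QQ)$ and additivity of $\chi_c$ is termwise. For (2), the K\"unneth formula for compactly supported cohomology over the field $\QQ$ gives $H_c^k(X\times Y,\QQ)\cong\bigoplus_{i+j=k}H_c^i(X,\QQ)\otimes_\QQ H_c^j(Y,\QQ)$; taking the alternating sum of dimensions and regrouping yields $\chi_c(X\times Y)=\chi_c(X)\chi_c(Y)$, and the finiteness also shows $X\times Y$ is tame.

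For (3), Poincar\'e duality for the oriented manifold $\RR^k$ gives $H_c^j(\RR^k,\QQ)\cong H_{k-j}(\RR^k,\QQ)$, which is $\QQ$ for $j=k$ and $0$ otherwise, so $\chi_c(\RR^k)=(-1)^k$; taking $k=1$ and $k=2$ (with $\CC\cong\RR^2$) gives $\chi_c(\RR)=-1$ and $\chi_c(\CC)=1$. For (4), $\PP^{n-1}(K)$ admits the standard affine paving $\PP^{n-1}(K)=\bigsqcup_{i=0}^{n-1}K^i$ by locally closed cells (stratify by the position of the first nonzero homogeneous coordinate); stratification-additivity of $\chi_c$ --- a consequence of the long exact sequence of compactly supported cohomology attached to a closed subspace and its open complement --- together with (2) and (3) gives $\chi_c(\PP^{n-1}(K))=\sum_{i=0}^{n-1}\chi_c(K)^i=[n]_{\chi_c(K)}$.

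For (5): since $S_m$ is finite and $X^m$ is locally compact Hausdorff, the transfer argument gives $H_c^*(\Sym^m X,\QQ)=H_c^*(X^m,\QQ)^{S_m}$, and the K\"unneth formula identifies $H_c^*(X^m,\QQ)$ with $V^{\otimes m}$, where $V:=H_c^*(X,\QQ)$ is a finite-dimensional graded vector space (finite because $X$ is tame) and $S_m$ permutes the tensor factors with the Koszul sign rule. In particular $H_c^*(\Sym^m X)$ is finite-dimensional and concentrated in finitely many degrees, so $\Sym^m X$ is tame. Since taking $S_m$-invariants over $\QQ$ is exact, $\dim\big((V^{\otimes m})_k\big)^{S_m}=\frac{1}{m!}\sum_{\sigma\in S_m}\tr\!\big(\sigma\mid(V^{\otimes m})_k\big)$; multiplying by $(-1)^k$ and summing over $k$ converts this into a sum of supertraces:
\[
    \chi_c(\Sym^m X)=\frac{1}{m!}\sum_{\sigma\in S_m}\mathrm{str}\!\left(\sigma\mid V^{\otimes m}\right).
\]

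The step I expect to require the most care is the evaluation of these supertraces. For a single $\ell$-cycle $\sigma$ acting on $V^{\otimes\ell}$, only the ``diagonal'' basis vectors $e^{\otimes\ell}$ with $e$ a homogeneous basis vector of $V$ contribute, and cycling $e$ past $\ell-1$ copies of itself contributes the Koszul sign $(-1)^{(\ell-1)\deg e}$; combining this with the degree weighting $(-1)^{\ell\deg e}$ built into the supertrace gives $\mathrm{str}(\sigma\mid V^{\otimes\ell})=\sum_i(-1)^{(2\ell-1)\deg e_i}=\sum_i(-1)^{\deg e_i}=\chi_c(X)$ (since $2\ell-1$ is odd), independently of $\ell$. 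Because supertrace is multiplicative over the super tensor factors cut out by the cycles of a permutation, it follows that $\mathrm{str}(\sigma\mid V^{\otimes m})=\chi_c(X)^{c(\sigma)}$, where $c(\sigma)$ is the number of cycles of $\sigma$. The classical identity $\sum_{\sigma\in S_m}x^{c(\sigma)}=x(x+1)(x+2)\cdots(x+m-1)$ --- the generating function for the unsigned Stirling numbers of the first kind --- then gives
\[
    \chi_c(\Sym^m X)=\frac{x(x+1)(x+2)\cdots(x+m-1)}{m!}\bigg|_{x=\chi_c(X)}=\multi{\chi_c(X)}{m},
\]
and the equivalent generating-function statement follows immediately from the definition of the exponential action $\sum_{m\geq0}\multi{a}{m}t^m=\left(\frac{1}{1-t}\right)^a$.
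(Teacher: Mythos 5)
Your proof is correct, and for parts (1), (2), and (4) it follows essentially the same route as the paper (additivity over disjoint unions and locally closed stratifications, K\"unneth for the product formula, and the affine paving of projective space). Two steps differ. For (3) you invoke Poincar\'e duality for $\RR^k$ to get $\chi_c(\RR^k)=(-1)^k$ directly, whereas the paper uses the self-referential decomposition $\RR\cong\RR\sqcup\{0\}\sqcup\RR$ and solves $\chi_c(\RR)=2\chi_c(\RR)+1$; your version needs duality as an input but generalizes immediately to all $k$, while the paper's trick is more elementary and uses only additivity. For (5) the paper simply cites MacDonald (with a pointer to Vakil's notes), while you actually prove the statement via the transfer isomorphism $H_c^*(\Sym^m X,\QQ)\cong H_c^*(X^m,\QQ)^{S_m}$, the supertrace computation $\mathrm{str}(\sigma\mid V^{\otimes m})=\chi_c(X)^{c(\sigma)}$ (your sign bookkeeping $(-1)^{(\ell-1)\deg e}\cdot(-1)^{\ell\deg e}=(-1)^{\deg e}$ for an $\ell$-cycle is right), and the Stirling-number identity $\sum_{\sigma\in S_m}x^{c(\sigma)}=x(x+1)\cdots(x+m-1)$; this is exactly the standard proof of MacDonald's theorem, so you have made the paper's black box self-contained at the cost of assuming the transfer and K\"unneth isomorphisms for compactly supported cohomology of the locally compact Hausdorff spaces at hand, which is the same level of rigor at which the paper cites Spanier for multiplicativity.
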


\begin{proof}
Part (1) is a general property of Euler characteristics and (2) holds for the compactly supported Euler characteristic (see \cite[Thm. 9.3.1]{spanier}.)  To see (3) we begin with the homeomorphism
\[
    \RR = (-\infty, 0) \sqcup \{0\} \sqcup (0,\infty) \cong \RR \sqcup \{0\} \sqcup \RR,
\]
and take $\chi_c$ of both sides to get
\[
    \chi_c(\RR) = 2\chi_c(\RR) + 1 \Longrightarrow \chi_c(\RR) = -1.
\]
Then $\CC \cong \RR^2$ and (2) imply $\chi_c(\CC) = \chi_c(\RR)^2 = 1$. To compute the Euler characteristic of projective space we use the cell decomposition
\[
    \PP^{n-1}(K) = K^{n-1} \sqcup K^{n-2} \sqcup \ldots \sqcup K \sqcup 1,
\]
where $1 := K^0$ is the one point space. Taking $\chi_c$ when $K = \RR$ or $\CC$ we have
\[
    \chi_c(\PP^{n-1}(K)) = \chi_c(K)^{n-1} + \chi_c(K)^{n-2} + \ldots + \chi_c(K) + 1 = [n]_{\chi_c(K)}.
\]
The final assertion (5) is a theorem due to MacDonald \cite{Macdonald}; see Vakil's notes \cite[Thm. 2.3]{Vakil Notes} for a nice one line proof.
\end{proof}

Lemma \ref{lem poly dec} and the cell decomposition of projective space used in the proof of Lemma \ref{lem EC props} together imply that
\begin{equation}
\label{eqn P exp}
    P_{d,n}(x) = \left[\binom{n+d}{n}\right]_{x} - \left[\binom{n+d-1}{n}\right]_{x}.
\end{equation}
We now prove the main result of this section.

\begin{thm}
\label{thm euler char}
If $d, n \geq 1$, then
\[
    \chi_c(\irr_{d,n}(\RR)) = M_{d,n}(-1) \hspace{.75in}
    \chi_c(\irr_{d,n}(\CC)) = M_{d,n}(1).
\]
\end{thm}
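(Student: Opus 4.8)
The plan is to leverage the higher cyclotomic identity (Theorem \ref{thm higher cyclo}) together with the symmetric product formula for $\chi_c$ (Lemma \ref{lem EC props}(5)) to show that the assignments $d \mapsto \chi_c(\irr_{d,n}(K))$ satisfy exactly the same recursion that characterizes $d \mapsto M_{d,n}(\pm 1)$, where $+1$ is used for $K = \CC$ and $-1$ for $K = \RR$. First I would take the unique factorization decomposition \eqref{eqn unique fac} and apply $\chi_c$ to both sides, using that $\chi_c$ is additive over disjoint unions (part (1)), multiplicative over products (part (2)), and satisfies $\chi_c(\Sym^m X) = \multi{\chi_c(X)}{m}$ (part (5)). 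This yields, for each $d \geq 1$,
\[
    \chi_c(\poly_{d,n}(K)) = \sum_{\lambda \vdash d}\prod_{j\geq 1}\multi{\chi_c(\irr_{j,n}(K))}{m_j(\lambda)},
\]
which is precisely the shape of the relation in Lemma \ref{lem comb euler} with $a_d = \chi_c(\poly_{d,n}(K))$ and $b_j = \chi_c(\irr_{j,n}(K))$.

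Next I would compute the left-hand side $\chi_c(\poly_{d,n}(K))$ directly. By Lemma \ref{lem poly dec}, $\poly_{d,n}(K)$ is the difference of two projective spaces, so by Lemma \ref{lem EC props}(1) and (4),
\[
    \chi_c(\poly_{d,n}(K)) = \Big[\tbinom{n+d}{n}\Big]_{\chi_c(K)} - \Big[\tbinom{n+d-1}{n}\Big]_{\chi_c(K)},
\]
which by \eqref{eqn P exp} equals $P_{d,n}(\chi_c(K))$. Thus the sequence $a_d := \chi_c(\poly_{d,n}(K))$ equals $P_{d,n}(\chi_c(K))$ and satisfies $a_0 = 1$ (with the convention $\poly_{0,n}(K)$ a point). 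By the uniqueness clause of Lemma \ref{lem comb euler} applied in the binomial ring $R = \ZZ$, the sequence $b_j = \chi_c(\irr_{j,n}(K))$ is the unique sequence with $\sum_{d\geq 0} a_d t^d = \prod_{j\geq 1}(1-t^j)^{-b_j}$. On the other hand, specializing the higher cyclotomic identity Theorem \ref{thm higher cyclo} at $x = \chi_c(K)$ gives $\sum_{d\geq 0} P_{d,n}(\chi_c(K))t^d = \prod_{j\geq 1}(1 - t^j)^{-M_{j,n}(\chi_c(K))}$, exhibiting $b_j = M_{j,n}(\chi_c(K))$ as a sequence with the same property. By uniqueness these two sequences agree, so $\chi_c(\irr_{d,n}(K)) = M_{d,n}(\chi_c(K))$ for all $d \geq 1$; taking $K = \RR$ and $K = \CC$ gives the two claimed identities.

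The main subtlety to handle carefully is the transition from identities in $\Lambda(\QQ[x])$ to identities in $\Lambda(\ZZ)$. The higher cyclotomic identity lives in $\QQ[x][\![t]\!]$, but once we substitute the integer $x = \chi_c(K) \in \{-1, 1\}$ we land in $\ZZ[\![t]\!]$, and $\ZZ$ is a binomial ring, so Lemma \ref{lem comb euler} applies and the uniqueness argument is legitimate; one should check that $P_{d,n}(-1)$ and $M_{d,n}(-1)$ are indeed integers, which follows since $M_{d,n}(x)$ is integer-valued on integers (it counts $|\irr_{d,n}(\FF_q)|$ for infinitely many integer arguments $q$) and the defining relation \eqref{eqn unique fac polys} then forces $P_{d,n}$ to be integer-valued as well. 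A secondary point is verifying $a_0 = 1$ on both sides so that Lemma \ref{lem comb euler} applies verbatim; this is immediate from the constant terms of both power series. No genuine obstacle arises beyond this bookkeeping — the entire argument is a clean application of the uniqueness half of Lemma \ref{lem comb euler} once both sequences are shown to solve the same functional equation.
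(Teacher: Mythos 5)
Your proposal is correct and follows essentially the same route as the paper: apply $\chi_c$ to the unique factorization decomposition \eqref{eqn unique fac}, identify $\chi_c(\poly_{d,n}(K))$ with $P_{d,n}(\chi_c(K))$ via Lemma \ref{lem poly dec}, and invoke the uniqueness clause of Lemma \ref{lem comb euler} against the specialized higher cyclotomic identity. The additional bookkeeping you flag (integrality and working in $\Lambda(\ZZ)$ rather than $\Lambda(\QQ[x])$) is harmless but not strictly necessary, since uniqueness may equally be applied in the binomial ring $\QQ$.
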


\begin{proof}
Let $K = \RR$ or $\CC$. Then \eqref{eqn unique fac} and Lemma \ref{lem EC props} imply that
\[
    \chi_c(\poly_{d,n}(K)) = \sum_{\lambda\vdash d}\prod_{j\geq 1}\chi_c(\Sym^{m_j}(\irr_{j,n}(K))) = \sum_{\lambda\vdash d}\prod_{j\geq 1} \multi{\chi_c(\irr_{j,n}(K))}{m_j}.
\]
Lemma \ref{lem comb euler} implies that the above identity is equivalent to
\[
    \sum_{d\geq 0}\chi_c(\poly_{d,n}(K))t^d = \prod_{j\geq 1}\left(\frac{1}{1 - t^j}\right)^{\chi_c(\irr_{j,n}(K))}.
\]
On the other hand, Lemma \ref{lem poly dec} and Lemma \ref{lem EC props}(4) show that
\begin{align*}
    \chi_c(\poly_{d,n}(K)) &= \chi_c(\PP^{\binom{n+d}{n}-1}(K)) - \chi_c(\PP^{\binom{n + d - 1}{n}-1}(K))\\
    &= \left[\binom{n+d}{n}\right]_{\chi_c(K)} - \left[\binom{n+d-1}{n}\right]_{\chi_c(K)}\\
    &= P_{d,n}(\chi_c(K)).
\end{align*}
The higher cyclotomic identity (Theorem \ref{thm higher cyclo}) implies that
\[
    \sum_{d\geq 0}P_{d,n}(\chi_c(K))t^d = \prod_{j\geq 1}\left(\frac{1}{1 - t^j}\right)^{M_{j,n}(\chi_c(K))}.
\]
Hence by the uniqueness of Lemma \ref{lem comb euler} we conclude that for all $d, n \geq 1$,
\[
    \chi_c(\irr_{d,n}(\RR)) = M_{d,n}(\chi_c(\RR)) = M_{d,n}(-1),
\]
and similarly for $K = \CC$.
\end{proof}

\section{Evaluating higher necklace polynomials at roots of unity}
\label{sec neck eval}

In this section we express the values of higher necklace polynomials $M_{d,n}(\zeta_p)$ at prime order roots of unity in terms of the balanced base $p$ expansion of the number of variables $n$.

\begin{defn}
\label{def balanced}
Let $b\geq 2$ and $n\geq 1$ be integers. A \textit{balanced base $b$ expansion of $n$} is an expression
\[
    n = b^{k_{2m}} - b^{k_{2m-1}} + b^{k_{2m-2}} - \ldots + b^{k_{2}} - b^{k_1},
\]
where $0\leq k_0 < k_1 < \ldots < k_{2m} $ is an increasing sequence of integers of even length and the coefficients of the powers of $b$ alternate between $\pm 1$.
\end{defn}

\begin{lemma}
\label{lem balanced}
Let $n \geq 0$ and $b \geq 2$ be integers. Then $n$ has a balanced base $b$ expansion if and only if the base $b$ expansion of $n$ only contains the digits $0$ and $b - 1$. Furthermore, if $n$ has a balanced base $b$ expansion, then it is unique.
\end{lemma}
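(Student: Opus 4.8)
The plan is to reduce the statement to the ordinary base $b$ representation of $n$ by means of the elementary identity
\[
    b^{c} - b^{l} = (b-1)\big(b^{l} + b^{l+1} + \cdots + b^{c-1}\big), \qquad 0 \leq l < c.
\]
First I would take a balanced base $b$ expansion, write it as an alternating sum $n = \sum_{i=1}^{2m} (-1)^i b^{k_i}$ with $0 \leq k_1 < k_2 < \cdots < k_{2m}$, and group the terms into consecutive pairs $b^{k_{2j}} - b^{k_{2j-1}}$. Applying the identity to each pair and using the strict inequalities $k_{2j} < k_{2j+1}$ to see that the resulting exponent ranges are pairwise disjoint, one obtains
\[
    n = (b-1)\sum_{i \in S} b^{i}, \qquad S := \bigsqcup_{j=1}^{m} \{\, k_{2j-1},\, k_{2j-1}+1,\, \ldots,\, k_{2j}-1 \,\}.
\]
Every coefficient of a power of $b$ occurring here is $0$ or $b-1$, so no carrying takes place and this \emph{is} the base $b$ expansion of $n$: the digit of $n$ in position $i$ equals $b-1$ if $i \in S$ and $0$ otherwise. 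This proves the forward implication, and it also records the bookkeeping fact that will drive the uniqueness argument: the set of positions at which the base $b$ digit of $n$ equals $b-1$ is exactly $S$.

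For the converse I would start from an $n \geq 1$ all of whose base $b$ digits lie in $\{0, b-1\}$, let $S \subseteq \NN$ be the (finite, nonempty) set of positions carrying the digit $b-1$, and decompose $S$ into its maximal runs of consecutive integers $[l_1, r_1], \ldots, [l_m, r_m]$, listed in increasing order. Maximality of the runs is exactly the assertion that $r_j + 1 < l_{j+1}$ for each $j$. Setting $k_{2j-1} := l_j$ and $k_{2j} := r_j + 1$ then gives $0 \leq k_1 < k_2 < \cdots < k_{2m}$, the only inequality needing an argument being $k_{2j} = r_j + 1 < l_{j+1} = k_{2j+1}$; running the computation of the first paragraph in reverse yields $n = \sum_{i=1}^{2m} (-1)^i b^{k_i}$, a balanced base $b$ expansion. (The case $n = 0$ is the empty expansion.)

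Uniqueness follows by combining the two halves. Given any balanced base $b$ expansion of $n$, the associated set $S = \bigsqcup_j \{ k_{2j-1}, \ldots, k_{2j}-1 \}$ must equal the set of positions at which $n$ has the digit $b-1$, hence depends only on $n$; and inside $S$ the blocks $\{ k_{2j-1}, \ldots, k_{2j}-1 \}$ must be the maximal runs of $S$, because $k_{2j} < k_{2j+1}$ forces the position $k_{2j}$ to lie strictly between block $j$ and block $j+1$ and to be absent from $S$, so consecutive blocks cannot merge. Since the maximal-run decomposition of a finite set is unique, each $k_i$ is determined by $n$. The only genuine work is the interval bookkeeping in the middle paragraph — keeping half-open and closed ranges straight and checking that maximality of the runs is literally the chain of strict inequalities among the $k_i$ — and I do not anticipate any conceptual obstacle.
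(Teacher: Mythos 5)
Your proof is correct and follows essentially the same route as the paper's: both directions are handled by the identity $b^{c}-b^{l}=(b-1)(b^{l}+\cdots+b^{c-1})$, translating between balanced expansions and ordinary base $b$ expansions with digits in $\{0,b-1\}$, and uniqueness is inherited from uniqueness of the ordinary expansion. Your maximal-runs bookkeeping makes the uniqueness step more explicit than the paper's brief remark, but it is the same underlying argument.
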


\begin{proof}
First suppose that the base $b$ expansion of $n$ contains only the digits $0$ and $b - 1$,
\[
    n = (b - 1)b^{k_m} + (b - 1)b^{k_{m-1}} + \ldots + (b - 1)b^{k_1}.
\]
Expanding each $(b - 1)b^k = b^{k+1} - b^k$ term and collecting coefficients on each power of $b$ gives a balanced base $b$ expansion of $n$. Note that each nonzero base $b$ digit of $n$ contributes two terms to the balanced expansion and cancellation occurs in pairs, hence the expansion we get in this way has an even number of terms.

Next suppose that $n$ has a balanced base $b$ expansion
\begin{equation}
\label{eqn balanced}
    n = b^{k_{2m}} - b^{k_{2m-1}} + b^{k_{2m-2}} - \ldots + b^{k_2} - b^{k_1}.
\end{equation}
If $k_{2\ell} \neq k_{2\ell - 1} + 1$, then we can replace $b^{2\ell} - b^{2\ell-1}$ in \eqref{eqn balanced} with
\begin{align*}
    b^{k_{2\ell}} - b^{k_{2\ell-1}} &= b^{k_{2\ell}} - b^{k_{2\ell}-1} + b^{k_{2\ell}-2} - b^{k_{2\ell}-3} + \ldots + b^{k_{2\ell - 1}+1} - b^{k_{2\ell -1}}\\
    &= (b - 1)b^{k_{2\ell}-1} + (b - 1)b^{2\ell -3} + \ldots + (b - 1)b^{2\ell - 1}.
\end{align*}
Note that here we are using that a balanced base $b$ expansion has an even number of terms. Hence, after making this replacement for each consecutive pair of terms in \eqref{eqn balanced}, we have a base $b$ expansion for $n$ which only contains the digits $0$ and $b - 1$.

The uniqueness of balanced base $b$ expansions then follows from the above argument and the uniqueness of the usual base $b$ expansion of $n$.
\end{proof}

\begin{example}
Every positive integer has a balanced base $2$ expansion since $0$ and $b -1 = 1$ are the only possible binary digits. For example the balanced base $2$ expansion of $n = 55$ is
\[
    55 = 2^6 - 2^4 + 2^3 - 1
\]
\end{example}

\begin{thm}
\label{thm balanced}
Let $p$ be a prime and let $n\geq 1$ be an integer such that
\[
    n = \sum_{k\geq 0}b_kp^k
\]
is the balanced base $p$ expansion of $n$. If $\zeta_p$ is a primitive $p$th root of unity, then
\[
    M_{d,n}(\zeta_p) = \begin{cases} b_k & \text{if }d = p^k,\\ 0 & \text{otherwise.}\end{cases}
\]
Thus it follows that $\Phi_p(x)$ divides $M_{d,n}(x)$ for all but finitely many $d\geq 1$ whenever $n$ has a balanced base $p$ expansion.
\end{thm}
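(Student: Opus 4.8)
The plan is to evaluate the power series $\sum_{d\geq0}P_{d,n}(\zeta_p)t^d$ in closed form and then invoke the uniqueness half of Lemma \ref{lem comb euler}. Evaluating the higher cyclotomic identity (Theorem \ref{thm higher cyclo}) at $x=\zeta_p$ gives
\[
    \sum_{d\geq0}P_{d,n}(\zeta_p)t^d=\prod_{j\geq1}\Big(\frac{1}{1-t^j}\Big)^{M_{j,n}(\zeta_p)}
\]
in $\Lambda(\QQ(\zeta_p))$ (note $\QQ(\zeta_p)$ is a binomial ring, being a $\QQ$-algebra, and $P_{0,n}(\zeta_p)=1$), so it suffices to identify the left-hand side with $\prod_{j\geq1}(1-t^j)^{-c_j}$ where $c_j=b_k$ when $j=p^k$ and $c_j=0$ otherwise; the uniqueness in Lemma \ref{lem comb euler} then forces $M_{j,n}(\zeta_p)=c_j$, which is the asserted formula. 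To begin, set $N_d:=\binom{n+d}{n}$ with $N_{-1}:=0$. Then \eqref{eqn P exp} together with $[m]_x=\frac{x^m-1}{x-1}$ gives $P_{d,n}(x)=\frac{x^{N_d}-x^{N_{d-1}}}{x-1}$, and a one-line power-series manipulation shows that, writing $G_n(x,t):=\sum_{d\geq0}x^{N_d}t^d$,
\[
    \sum_{d\geq0}P_{d,n}(x)t^d=\frac{(1-t)G_n(x,t)-1}{x-1}.
\]
So the problem reduces to computing $G_n(\zeta_p,t)$, i.e.\ to understanding $\binom{n+d}{n}\bmod p$ as $d$ varies.

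This residue is the combinatorial heart of the argument. Let $S\subseteq\NN$ be the set of positions at which the base-$p$ expansion of $n$ has digit $p-1$; by Lemma \ref{lem balanced} the hypothesis that $n$ has a balanced base $p$ expansion says exactly that every base-$p$ digit of $n$ lies in $\{0,p-1\}$, so $S$ records all the nonzero digits. A short argument with Lucas's theorem (Lemma \ref{lem Lucas}) --- bookkeeping the carries in the base-$p$ addition of $n$ and $d$ --- shows that $p\nmid\binom{n+d}{n}$ precisely when the $i$-th base-$p$ digit of $d$ vanishes for every $i\in S$, and that in that case $\binom{n+d}{n}\equiv\prod_{i\in S}\binom{p-1}{p-1}\equiv1\pmod p$. (For the vanishing direction one takes the smallest $j\in S$ with $d_j\neq0$, observes that no carries occur below position $j$, so that the $j$-th digit of $n+d$ equals $d_j-1<p-1=n_j$, and applies Lucas.) Hence $\zeta_p^{\binom{n+d}{n}}$ equals $\zeta_p$ when $d\in A:=\{d\geq0:\ d_i=0\text{ for all }i\in S\}$ and equals $1$ otherwise, so $G_n(\zeta_p,t)=(\zeta_p-1)\sum_{d\in A}t^d+\frac{1}{1-t}$.

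It remains to evaluate $\sum_{d\in A}t^d$ and unwind. Factoring over digit positions, $\sum_{d\in A}t^d=\prod_{i\notin S}\big(\sum_{c=0}^{p-1}t^{cp^i}\big)=\prod_{i\notin S}\frac{1-t^{p^{i+1}}}{1-t^{p^i}}$, a product that converges $t$-adically because its $i$-th factor is $1+O(t^{p^i})$; comparing with the telescoping identity $\prod_{i\geq0}\frac{1-t^{p^{i+1}}}{1-t^{p^i}}=\frac{1}{1-t}$ and using that $S$ is finite gives $\sum_{d\in A}t^d=\frac{1}{1-t}\prod_{i\in S}\frac{1-t^{p^i}}{1-t^{p^{i+1}}}$. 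Substituting back, the dependence on $\zeta_p$ cancels and one is left with $\sum_{d\geq0}P_{d,n}(\zeta_p)t^d=\prod_{i\in S}\frac{1-t^{p^i}}{1-t^{p^{i+1}}}$. Expanding each digit $p-1$ at position $i$ as $p^{i+1}-p^i$ --- exactly the construction in the proof of Lemma \ref{lem balanced} --- shows the coefficients of the balanced base $p$ expansion are $b_k=\mathbf{1}_S(k-1)-\mathbf{1}_S(k)$, whence $\prod_{i\in S}\frac{1-t^{p^i}}{1-t^{p^{i+1}}}=\prod_{k\geq0}(1-t^{p^k})^{-b_k}$, which is of the required form. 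By Lemma \ref{lem comb euler} this yields $M_{d,n}(\zeta_p)=b_k$ for $d=p^k$ and $0$ otherwise. The final assertion is then immediate: the balanced expansion has only finitely many nonzero coefficients, so $M_{d,n}(\zeta_p)=0$ for all but finitely many $d$; since $\Phi_p$ is the minimal polynomial of $\zeta_p$ over $\QQ$ and $M_{d,n}\in\QQ[x]$, this forces $\Phi_p\mid M_{d,n}$ for all but finitely many $d$.

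The step I expect to be the main obstacle is the residue computation $\binom{n+d}{n}\bmod p$ --- above all the observation that under the balanced hypothesis this residue is always $0$ or $1$, which is what collapses $G_n(\zeta_p,t)$ to a tractable expression. Everything downstream --- the telescoping product, the $t$-adic convergence check, and the passage between the digit set $S$ and the signed coefficients $b_k$ --- is routine bookkeeping.
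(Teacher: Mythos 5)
Your proposal is correct and follows essentially the same route as the paper: evaluate the higher cyclotomic identity at $\zeta_p$, use Lucas's theorem to show that $\binom{n+d}{n}\bmod p$ is $1$ or $0$ according to whether the base-$p$ digits of $d$ avoid the support of $n$, telescope the resulting product, and invoke the uniqueness in Lemma \ref{lem comb euler}. The only difference is notational --- you track $G_n(x,t)=\sum_d x^{\binom{n+d}{n}}t^d$ directly where the paper tracks $Q(t)=\sum_d\bigl[\binom{n+d}{n}\bigr]_{\zeta_p}t^d$, and these differ by the affine change $Q(t)=\bigl(G_n(\zeta_p,t)-\tfrac{1}{1-t}\bigr)/(\zeta_p-1)$, so the computations coincide.
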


Before proving Theorem \ref{thm balanced} we prove two lemmas. Recall that if $m \geq 0$ is an integer we write
\[
    [m]_x := \frac{x^m - 1}{x - 1} = x^{m-1} + x^{m-2} + \ldots + x + 1.
\]
\begin{lemma}
\label{lem qint}
If $\zeta$ is a non-trivial $n$th root of unity, then $[m]_\zeta$ depends only on $m$ modulo $n$.
\end{lemma}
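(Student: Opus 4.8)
The plan is to reduce the statement to the single periodicity identity $[m+n]_\zeta = [m]_\zeta$ and then induct. First I would note that since $\zeta$ is a \emph{non-trivial} root of unity we have $\zeta \neq 1$, so $\zeta - 1$ is a nonzero element of $\CC$ and the expression $[m]_\zeta = \frac{\zeta^m - 1}{\zeta - 1}$ is genuinely defined (this is the only well-definedness point to check). Equivalently, I would work from the polynomial identity $x^m - 1 = (x - 1)[m]_x$ in $\ZZ[x]$, which specializes at $x = \zeta$ to $\zeta^m - 1 = (\zeta - 1)[m]_\zeta$ and lets me manipulate $[m]_\zeta$ through the numerator $\zeta^m - 1$.

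Next I would use the defining property $\zeta^n = 1$: for any integer $m \geq 0$ it gives $\zeta^{m+n} = \zeta^m \zeta^n = \zeta^m$, hence $\zeta^{m+n} - 1 = \zeta^m - 1$, and dividing by $\zeta - 1$ yields
\[
    [m+n]_\zeta = \frac{\zeta^{m+n} - 1}{\zeta - 1} = \frac{\zeta^m - 1}{\zeta - 1} = [m]_\zeta.
\]
Finally, since $[m]_x$ is only defined for nonnegative integers $m$, I would finish by a trivial induction: any two nonnegative integers congruent modulo $n$ are connected by finitely many shifts of $n$, so repeated application of $[m+n]_\zeta = [m]_\zeta$ shows $[m]_\zeta$ depends only on $m \bmod n$. (As a sanity check one can verify the boundary case $[0]_\zeta = 0 = \frac{\zeta^n - 1}{\zeta - 1} = [n]_\zeta$.)

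I do not expect a genuine obstacle here — the argument is a one-line computation dressed up with an induction. The only thing warranting care is keeping the hypothesis $\zeta \neq 1$ in play so that every division by $\zeta - 1$ is legitimate, and remembering that the index $m$ lives in $\NN$ rather than $\ZZ/n\ZZ$, so the conclusion is phrased as periodicity of the function $m \mapsto [m]_\zeta$ on nonnegative integers.
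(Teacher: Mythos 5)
Your proof is correct and is essentially the paper's argument: both rest on $\zeta^n = 1$ together with $\zeta \neq 1$, which is what makes division by $\zeta - 1$ (equivalently, the vanishing $[n]_\zeta = 0$) legitimate. The paper packages the reduction as a single polynomial identity $[m]_x = x^b[a]_{x^n}[n]_x + [b]_x$ for $m = an + b$ and evaluates at $\zeta$, whereas you step down by one shift of $n$ at a time via $[m+n]_\zeta = [m]_\zeta$ and induct; the content is the same.
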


\begin{proof}
If $\zeta$ is a nontrivial $n$th root of unity, then
\[
    [n]_\zeta = \zeta^{n-1} + \zeta^{n-2} + \ldots + \zeta + 1 = 0.
\]
If $m = an + b$, then
\[
    [m]_x = \frac{x^{an + b} - 1}{x - 1}
    = x^b\cdot\frac{x^{an} - 1}{x^n - 1}\cdot\frac{x^n - 1}{x - 1} + \frac{x^b - 1}{x - 1}
    = x^b[a]_{x^n}[n]_x + [b]_x.
\]
Evaluating at $x = \zeta$ gives
\[
    [m]_\zeta = \zeta^b a [n]_\zeta + [b]_\zeta = [b]_\zeta.\qedhere
\]
\end{proof}

Lemma \ref{lem Lucas} is a beautiful result due to Lucas \cite{Lucas}; see Fine \cite{Fine} for a modern proof.

\begin{lemma}[Lucas's Theorem]
\label{lem Lucas}
If $p$ is a prime and
\begin{align*}
    m &= a_k p^k + a_{k-1}p^{k-1} +\ldots + a_1p + a_0\\
    n &= \hspace{1pt}b_k p^k + \hspace{1pt}b_{k-1}p^{k-1} +\ldots + b_1p \hspace{1pt}+ \hspace{1pt}b_0
\end{align*}
are the base $p$ expansions of the natural numbers $m$ and $n$ (possibly with leading zero coefficients), then
\[
    \binom{m}{n} \equiv \binom{a_k}{b_k}\binom{a_{k-1}}{b_{k-1}}\cdots \binom{a_1}{b_1}\binom{a_0}{b_0} \bmod p.
\]
\end{lemma}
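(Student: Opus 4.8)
The plan is to prove Lucas's Theorem by computing the coefficient of $x^n$ in the polynomial $(1+x)^m \in \FF_p[x]$ in two ways. On one hand the binomial theorem gives this coefficient directly as $\binom{m}{n}$. On the other hand, using the base $p$ expansion $m = \sum_{i=0}^{k} a_i p^i$ we factor
\[
    (1+x)^m = \prod_{i=0}^{k}\big((1+x)^{p^i}\big)^{a_i},
\]
and simplify each factor modulo $p$. The essential input is the ``freshman's dream'': since $p \mid \binom{p}{j}$ for $0 < j < p$ one has $(u+v)^p \equiv u^p + v^p \bmod p$, and iterating this $i$ times yields $(1+x)^{p^i} \equiv 1 + x^{p^i} \bmod p$. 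Hence
\[
    (1+x)^m \equiv \prod_{i=0}^{k}\big(1 + x^{p^i}\big)^{a_i} \bmod p .
\]

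Next I would expand the right-hand side by the binomial theorem in each factor,
\[
    \prod_{i=0}^{k}\big(1 + x^{p^i}\big)^{a_i} = \sum_{0 \leq c_i \leq a_i}\Big(\prod_{i=0}^{k}\binom{a_i}{c_i}\Big)x^{c_0 + c_1 p + \cdots + c_k p^k},
\]
and extract the coefficient of $x^n$. Here is the only combinatorial point: since each index satisfies $0 \leq c_i \leq a_i \leq p-1$, the exponent $\sum_i c_i p^i$ is already written as a base $p$ expansion, so by uniqueness of base $p$ expansions the monomial $x^n$ with $n = \sum_i b_i p^i$ is produced by exactly one tuple of indices, namely $c_i = b_i$ for all $i$ --- and it is produced at all precisely when $b_i \leq a_i$ for every $i$. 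In either case the coefficient of $x^n$ on the right-hand side equals $\prod_{i=0}^{k}\binom{a_i}{b_i} \bmod p$, where one adopts the convention $\binom{a}{b} = 0$ for $b > a \geq 0$; this convention also correctly produces $\binom{m}{n} \equiv 0$ in the degenerate cases (for instance $n > m$ forces $b_i > a_i$ for some $i$). Equating the two expressions for the coefficient of $x^n$ in $\FF_p[x]$ gives $\binom{m}{n} \equiv \prod_{i=0}^{k}\binom{a_i}{b_i} \bmod p$. (Allowing leading zero coefficients in the hypothesis is what lets both expansions run over the same index set $0, \ldots, k$; this is harmless since $\binom{0}{0} = 1$.)

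The argument is short and the only point requiring genuine care is the reduction $(1+x)^{p^i} \equiv 1 + x^{p^i} \bmod p$: this is exactly where primality of $p$ enters, and it must be applied inductively in $i$. Everything else is formal manipulation with base $p$ digits. (The same statement also has a combinatorial proof --- orbit-counting for a $p$-group acting on the $n$-element subsets of an $m$-element set --- but the generating-function argument above, which is essentially the modern proof referred to in the text, is the most direct.)
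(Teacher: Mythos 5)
Your proof is correct and complete. Note that the paper does not actually prove this lemma --- it is quoted as a classical result with a pointer to Fine's article for ``a modern proof'' --- and the generating-function argument you give (reduce $(1+x)^m$ modulo $p$ via the iterated freshman's dream $(1+x)^{p^i}\equiv 1+x^{p^i}$, expand $\prod_i(1+x^{p^i})^{a_i}$, and use uniqueness of base $p$ expansions to isolate the unique tuple of digits contributing to $x^n$) is precisely that standard proof, with the one genuinely delicate point --- why only the tuple $c_i=b_i$ can produce the exponent $n$, and the $\binom{a_i}{b_i}=0$ convention when some $b_i>a_i$ --- handled correctly.
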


We now prove Theorem \ref{thm balanced}.

\begin{proof}[Proof of Theorem \ref{thm balanced}]
Suppose that $n$ has a balanced base $p$ expansion and let $\zeta_p$ be a non-trivial $p$th root of unity. Then by Theorem \ref{thm higher cyclo},
\begin{equation}
\label{eqn gen cyc eval}
    \sum_{d\geq 0}P_{d,n}(\zeta_p)t^d = \prod_{j\geq 1}\left(\frac{1}{1 - t^j}\right)^{M_{j,n}(\zeta_p)}.
\end{equation}
We evaluate $M_{d,n}(\zeta_p)$ by expressing the left hand side of \eqref{eqn gen cyc eval} as an infinite product of the same form in another way and then appealing to the uniqueness of Lemma \ref{lem comb euler}. Towards that end, let $Q(t) \in \Lambda(\QQ(\zeta_p))$ be defined by
\[
    Q(t) := \sum_{d\geq 0} \left[\binom{d+n}{n}\right]_{\zeta_p} t^d.
\]
Lemma \ref{lem poly dec} implies that $P_{d,n}(x)$ may be expressed as
\begin{equation}
\label{eqn q expr}
    P_{d,n}(x) = \frac{x^{\binom{d+n}{n}} - x^{\binom{d+n-1}{n}}}{x - 1} = \left[\binom{d+n}{n}\right]_x - \left[\binom{d+n-1}{n}\right]_x.
\end{equation}
Thus,
\begin{align*}
    \sum_{d\geq 0}P_{d,n}(\zeta_p)t^d &= \sum_{d\geq 0}\left(\left[\binom{d+n}{n}\right]_{\zeta_p} - \left[\binom{d+n-1}{n}\right]_{\zeta_p}\right)t^d \\
    &= \sum_{d\geq 0}\left[\binom{d+n}{n}\right]_{\zeta_p} t^d - t\sum_{d\geq 1}\left[\binom{d+n-1}{n}\right]_{\zeta_p} t^{d-1}\\
    &= Q(t) - tQ(t)\\
    &= (1 - t)Q(t).
\end{align*}
Next we determine the coefficients of $Q(t)$. Say positive integers $d$ and $n$ are \emph{$p$-complementary} if there is no $p^k$ with a non-zero coefficient in the base $p$ expansions of both $d$ and $n$. Now suppose that $n$ has a balanced base $p$ expansion. If $d$ and $n$ are not $p$-complementary, suppose $p^k$ is the smallest power of $p$ common to the base $p$ expansions of $d$ and $n$. Then the coefficient of $p^k$ in $d + n$ is 0 since
\begin{enumerate}
    \item the coefficient of $p^k$ in $n$ is $p - 1$ by our assumption that $n$ has a balanced base $p$ expansion,
    \item the coefficient of $p^k$ in $d$ is at least 1, and
    \item the minimality of $k$ implies there are no carries for smaller power $p$ in the sum $d + n$.
\end{enumerate}
Thus Lucas's theorem (Lemma \ref{lem Lucas}) implies that if $d$ and $n$ are not $p$-complementary, then
\[
    \binom{d+n}{n} \equiv 0 \bmod p
\]
since the factor corresponding to $p^k$ will be 0. Therefore, if $d$ and $n$ are not $p$-complementary, then by Lemma \ref{lem qint} we have
\[
    \left[\binom{d+n}{n}\right]_{\zeta_p} = [0]_{\zeta_p} = 0.
\]

Next suppose $d$ and $n$ have base $p$ expansions
\begin{align*}
    d &= a_kp^k + a_{k-1}p^{k-1} + \ldots + a_1 p + a_0\\
    n &= b_kp^k + b_{k-1}p^{k-1} + \ldots + b_1p + b_0.
\end{align*}
If $d$ and $n$ are $p$-complementary, then the base $p$ expansion of $d + n$ is
\[
    d+n = (a_k + b_k)p^k + (a_{k-1} + b_{k-1})p^{k-1} + \ldots + (a_1 + b_1)p + (a_0 + b_0)
\]
where for each $i$ at most one of $a_i$ and $b_i$ is non-zero. Lucas's theorem implies that
\[
    \binom{d+n}{n} \equiv \binom{a_k + b_k}{b_k}\binom{a_{k-1} + b_{k-1}}{b_{k-1}}\cdots \binom{a_1 + b_1}{b_1}\binom{a_0 + b_0}{b_0} \bmod p.
\]
Let $0 \leq i \leq k$. If $a_i = 0$, then
\[
    \binom{a_i + b_i}{b_i} = \binom{b_i}{b_i} = 1,
\]
and if $b_i = 0$ then 
\[
    \binom{a_i + b_i}{b_i} = \binom{a_i}{0} = 1.
\]
Therefore, if $d$ and $n$ are $p$-complementary, then
\[
    \binom{d+n}{n} \equiv 1 \bmod p.
\]
Hence by Lemma \ref{lem qint},
\[
    \left[\binom{d+n}{n}\right]_{\zeta_p} = [1]_{\zeta_p} = 1.
\]
Combining these computations we have
\[
    Q(t) = \sum_{d\geq 0} \left[\binom{d+n}{n}\right]_{\zeta_p} t^d = \sum_{\substack{d \text{ is $p$-comp.}\\ \text{to }n}} t^d.
\]
The existence and uniqueness of base $p$ expansions of natural numbers is equivalent to the following product formula,
\[
    \frac{1}{1 - t} = \sum_{d\geq 0}t^d = \prod_{k\geq 1}\sum_{a=0}^{p-1}t^{ap^k} = \prod_{k\geq 1} \frac{1 - t^{p^{k+1}}}{1 - t^{p^k}},
\]
where the factor of $\frac{1 - t^{p^{k+1}}}{1 - t^{p^k}}$ contributes to the coefficient of $t^d$ precisely when $d$ is not $p$-complementary to $p^k$. If $n = (p-1)p^{\ell_1} + (p-1)p^{\ell_2} + \ldots + (p-1)p^{\ell_s}$ is the base $p$ expansion of $n$ (which can be expressed in this form by the assumption that $n$ has a balanced base $p$ expansion,) then 
\[
    Q(t) = \sum_{\substack{d \text{ is $p$-comp.}\\ \text{to }n}} t^d = \frac{1}{1-t}\prod_{i=1}^s \frac{1 - t^{p^{\ell_i}}}{1 - t^{p^{\ell_i+1}}}.
\]
Therefore
\[
    \sum_{d\geq 0}P_{d,n}(\zeta_p)t^d = (1-t)Q(t) = \prod_{i=1}^s \frac{1 - t^{p^{\ell_i}}}{1 - t^{p^{\ell_i+1}}} = \prod_{k\geq 1}\left(\frac{1}{1 - t^{p^k}}\right)^{b_k},
\]
where $n = \sum_{k\geq 0}b_k p^k$ is the balanced base $p$ expansion of $n$. The uniqueness of product expansions of this form provided by Lemma \ref{lem comb euler} implies that $M_{p^k,n}(\zeta_p) = b_k$ and $M_{d,n}(\zeta_p) = 0$ when $d$ is not a power of $p$.
\end{proof}

For a fixed $n$ there are finitely many primes $p$ for which $n$ has a balanced base $p$ expansion. Theorem \ref{thm balanced} tells us that for each such prime $p$ there are only finitely many $d$ such that $M_{d,n}(\zeta_p) \neq 0$ for $\zeta_p$ a primitive $p$th root of unity.

\begin{remark}
When $n$ does not have a balanced base $p$ expansion, one can still use the methods of Theorem \ref{thm balanced} to determine the values of $M_{d,n}(\zeta_p)$, but the results become more challenging to state concisely. Since our main interest is in the case $p = 2$ and its application to evaluating Euler characteristics, we defer this more refined analysis.
\end{remark}

We end this section with an evaluation of $M_{d,n}(1)$ which will be used below to compute the compactly supported Euler characteristics of $\irr_{d,n}(\CC)$.

\begin{prop}
\label{prop eval 1}
For all $d, n \geq 1$,
\[
    M_{d,n}(1) = \begin{cases}n & \text{if } d = 1,\\ 0 & \text{otherwise.}\end{cases}
\]
\end{prop}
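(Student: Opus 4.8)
The plan is to evaluate both sides of the higher cyclotomic identity (Theorem~\ref{thm higher cyclo}) at $x = 1$ and exploit the uniqueness in Lemma~\ref{lem comb euler}. First I would compute $P_{d,n}(1)$ directly from the closed form \eqref{eqn P exp}. Since $[m]_1 = m$ for every integer $m\geq 0$, we get
\[
    P_{d,n}(1) = \binom{n+d}{n} - \binom{n+d-1}{n} = \binom{n+d-1}{n-1},
\]
using Pascal's rule. In particular $P_{0,n}(1) = 1$ and $P_{1,n}(1) = n$.

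The next step is to recognize the generating function $\sum_{d\geq 0} P_{d,n}(1)t^d = \sum_{d\geq 0}\binom{n+d-1}{n-1}t^d$. By the negative binomial series this is exactly $(1-t)^{-n}$, which in the notation of the exponential action on $\Lambda(\QQ)$ is $\big(\tfrac{1}{1-t}\big)^{n}$. Therefore, evaluating the higher cyclotomic identity at $x = 1$ gives
\[
    \left(\frac{1}{1 - t}\right)^{n} = \sum_{d\geq 0}P_{d,n}(1)t^d = \prod_{j\geq 1}\left(\frac{1}{1 - t^j}\right)^{M_{j,n}(1)}.
\]
Now I would apply the uniqueness clause of Lemma~\ref{lem comb euler} (with $R = \QQ$, $a_d = P_{d,n}(1)$): the left side is manifestly a product of the required form with $b_1 = n$ and $b_j = 0$ for all $j\geq 2$, so this must be \emph{the} factorization. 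Hence $M_{1,n}(1) = n$ and $M_{d,n}(1) = 0$ for $d\geq 2$, which is the claim.

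This argument is almost entirely formal once the two computational inputs are in place, so I do not expect a serious obstacle; the only point requiring a little care is the identification $\sum_d \binom{n+d-1}{n-1}t^d = (1-t)^{-n}$ as an element of $\Lambda(\QQ)$ compatible with the exponential-action convention $\big(\tfrac{1}{1-t}\big)^{a} = \sum_m \multi{a}{m}t^m$ — but this is immediate from \eqref{eqn multi def} since $\multi{n}{m} = \binom{n+m-1}{m} = \binom{n+m-1}{n-1}$. Alternatively, and perhaps more in keeping with the spirit of the surrounding section, one could note that this proposition is the $p \to \infty$ or ``degenerate'' analogue of Theorem~\ref{thm balanced}: formally it records that $x = 1$ behaves like a ``root of unity of order one,'' every $d\geq 1$ is $1$-complementary to $n$ in a trivial sense, and the balanced ``base $1$'' expansion of $n$ is just $n = n\cdot 1^0$. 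I would present the clean generating-function proof above as the main argument and perhaps mention this heuristic in a remark.
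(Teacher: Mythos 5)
Your argument is correct and is essentially identical to the paper's proof: both evaluate $P_{d,n}(1) = \binom{n+d-1}{d} = \multi{n}{d}$ via $[m]_1 = m$, identify the generating function with $\bigl(\frac{1}{1-t}\bigr)^n$, and invoke the uniqueness in Lemma~\ref{lem comb euler} together with the higher cyclotomic identity. No issues.
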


\begin{proof}
Note that $[m]_1 = m$, hence by \eqref{eqn q expr} we have
\[
    P_{d,n}(1) = \binom{d + n}{n} - \binom{d + n - 1}{n} = \binom{d + n - 1}{d} = \multi{n}{d}.
\]
Therefore
\[
    \sum_{d\geq 0}P_{d,n}(1)t^d = \sum_{d\geq 0}\multi{n}{d}t^d = \left(\frac{1}{1 - t}\right)^n.
\]
Thus the higher cyclotomic identity and the uniqueness of Lemma \ref{lem comb euler} imply that $M_{1,n}(1) = n$ and $M_{d,n}(1) = 0$ for $d > 1$.
\end{proof}

\section{Conclusion}
\label{sec conclusion}

Combining the results of the previous sections we now arrive at the main result.

\begin{thm}
\label{thm main}
Let $d, n \geq 1$ and suppose that $n = \sum_{k=0}^\ell b_k 2^k$ is the balanced binary expansion of $n$. Then
\[
    \chi_c(\irr_{d,n}(\RR)) = \begin{cases} b_k & \text{if }d = 2^k,\\ 0 & \text{otherwise.}\end{cases}
\]
\end{thm}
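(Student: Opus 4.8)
The plan is simply to assemble the three main results proved in the previous sections. First I would invoke Theorem \ref{thm euler char}, which identifies $\chi_c(\irr_{d,n}(\RR))$ with the value $M_{d,n}(-1)$ of the higher necklace polynomial. This reduces the problem entirely to evaluating $M_{d,n}$ at $x = -1$.

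Next I would observe that $-1$ is a primitive second root of unity, so $M_{d,n}(-1) = M_{d,n}(\zeta_2)$ with $p = 2$ and $\zeta_2 = -1$. To apply Theorem \ref{thm balanced} we need $n$ to admit a balanced base $2$ expansion; by Lemma \ref{lem balanced} this holds for \emph{every} $n \geq 1$, since the ordinary base $2$ expansion of any positive integer uses only the digits $0$ and $1 = 2 - 1$. Hence the balanced binary expansion $n = \sum_{k=0}^\ell b_k 2^k$ from the introduction is literally the balanced base $2$ expansion of $n$ in the sense of Definition \ref{def balanced}. Then Theorem \ref{thm balanced} with $p = 2$ yields directly
\[
    M_{d,n}(-1) = \begin{cases} b_k & \text{if } d = 2^k,\\ 0 & \text{otherwise,}\end{cases}
\]
and combining with the first step completes the proof.

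There is genuinely no new ingredient here: all the substance lies in the higher cyclotomic identity (Theorem \ref{thm higher cyclo}), the realization of the Euler characteristic as $M_{d,n}(-1)$ (Theorem \ref{thm euler char}), and the root-of-unity evaluation (Theorem \ref{thm balanced}). The only point requiring even a moment's attention is the bookkeeping observation that ``balanced binary expansion'' and ``balanced base $2$ expansion'' name the same object and that every $n$ has one, which is exactly the content of Lemma \ref{lem balanced}. If desired, one could also record as an immediate consequence that the coefficients $b_k$ lie in $\{-1,0,1\}$, so that $\chi_c(\irr_{d,n}(\RR)) \in \{-1,0,1\}$ for all $d, n \geq 1$.
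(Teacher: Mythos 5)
Your proposal is correct and follows exactly the paper's own argument: apply Theorem \ref{thm euler char} to reduce to $M_{d,n}(-1)$, then apply Theorem \ref{thm balanced} with $p=2$, $\zeta_2=-1$. Your extra remark that Lemma \ref{lem balanced} guarantees every $n\geq 1$ has a balanced base $2$ expansion is a worthwhile bit of bookkeeping the paper leaves implicit.
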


\begin{proof}
Theorem \ref{thm euler char} implies that $\chi_c(\irr_{d,n}(\RR)) = M_{d,n}(-1)$ and Theorem \ref{thm balanced} expresses the value of $M_{d,n}(-1)$ in terms of the balanced binary expansion of $n$.
\end{proof}

Similarly Proposition \ref{prop eval 1} gives the following computation of the Euler characteristic of $\irr_{d,n}(\CC)$.

\begin{prop}
Let $d, n \geq 1$. Then 
\[
    \chi_c(\irr_{d,n}(\CC)) = \begin{cases} n & \text{if }d = 1,\\ 0 & \text{otherwise.}\end{cases}
\]
\end{prop}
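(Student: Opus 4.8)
The plan is to read this statement directly off the two intermediate results already in hand. By Theorem \ref{thm euler char} applied with $K = \CC$, we have $\chi_c(\irr_{d,n}(\CC)) = M_{d,n}(1)$ for all $d, n \geq 1$. Proposition \ref{prop eval 1} then evaluates the right-hand side: $M_{d,n}(1) = n$ when $d = 1$ and $M_{d,n}(1) = 0$ otherwise. Combining these two facts yields the claim immediately, so the proof is essentially a two-line citation.

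As an independent check on the $d = 1$ case, I would observe that a total degree one polynomial over any field is automatically irreducible, so $\irr_{1,n}(\CC) = \poly_{1,n}(\CC)$; by Lemma \ref{lem poly dec} this is homeomorphic to $\PP^n(\CC) \setminus \PP^0(\CC)$, complex projective $n$-space with one point deleted. Using Lemma \ref{lem EC props}(1) and (4) together with $\chi_c(\CC) = 1$, this has $\chi_c = [n+1]_1 - 1 = (n+1) - 1 = n$, in agreement with the formula.

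The only part with genuine content is the vanishing $\chi_c(\irr_{d,n}(\CC)) = 0$ for $d \geq 2$, and I do not expect any obstacle there beyond what is already packaged inside Proposition \ref{prop eval 1}: its proof uses $[m]_1 = m$ to rewrite $P_{d,n}(1) = \multi{n}{d}$, whence $\sum_{d\geq 0} P_{d,n}(1) t^d = (1-t)^{-n}$, and then the uniqueness clause of Lemma \ref{lem comb euler} forces $M_{j,n}(1)$ to vanish for every $j > 1$. Thus the substance lives in the earlier sections, and the proof of the present proposition is purely a matter of assembling them.
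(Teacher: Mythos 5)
Your proof is correct and matches the paper's own argument, which likewise just combines Theorem \ref{thm euler char} (giving $\chi_c(\irr_{d,n}(\CC)) = M_{d,n}(1)$) with Proposition \ref{prop eval 1}. The direct verification of the $d=1$ case via $\irr_{1,n}(\CC) = \poly_{1,n}(\CC) \cong \PP^n(\CC)\setminus\PP^0(\CC)$ is a nice sanity check but not a different route.
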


As noted in the introduction, Theorem \ref{thm main} suggests that the space $\irr_{d,n}(\RR)$ may have an interesting cell decomposition determined by the binary expansion of the number of variables $n$. 

\end{document}